\journal{X} 
\begin{document}
\begin{frontmatter}
    \title{Riemannian optimization for model order reduction of linear systems with 
    quadratic outputs}
    \author[author1]{Xiaolong Wang}
    \author[author1]{Tongtu Tian}
    \address[author1]{School of Mathematics and Statistics, Northwestern Polytechnical University, Xi'an 710129, China}
    \begin{abstract}
        This paper investigates the optimal $H_2$ model order reduction for linear 
        systems with quadratic outputs. In the framework of Galerkin 
        projection, we first formulate the optimal $H_2$ MOR as an unconstrained 
        Riemannian 
        optimization problem on the Stiefel manifold. The Riemannian gradient of the 
        specific cost function is derived with the aid of Gramians of systems, and the 
        Dai-Yuan-type Riemannian conjugate gradient method is adopted to generate 
        structure-preserving reduced models. We also consider the optimal $H_2$ MOR based 
        on the product manifold, where some coefficient matrices of reduced models are 
        determined directly via the iteration of optimization problem, instead of the
        Galerkin projection method. In addition, we provide a scheme to compute low-rank 
        approximate solutions of Sylvester equations based on 
        the truncated polynomial expansions, which fully exploits the specific 
        structure of Sylvester equations in the optimization 
        problems, and enables an 
        efficient execution of our approach. Finally, two numerical examples are 
        simulated to  
        demonstrate the efficiency of our methods.
    \end{abstract}
    \begin{keyword}
        {linear system with quadratic outputs \sep model order reduction   
        \sep Riemannian manifold \sep Sylvester equations.}
    \end{keyword}
\end{frontmatter}


\section{Introduction}\label{sec:introduction}

The large-scale dynamical systems appear frequently in various science and engineering 
applications, which are often formulated by a couple of differential equations. 
Simulation 
of such models is unacceptably time and storage consuming because of the high order 
of dynamical systems. Model order reduction (MOR) is a powerful tool to enable fast 
simulation of large-scale systems, which not only captures the dynamical 
behavior of systems accurately, but also reduces the 
computational load significantly during the simulation. Over the past few decades, MOR 
has been extensively studied, and there are various approaches used in the engineering, 
mainly including balanced truncation (BT) method, Krylov subspace methods, proper 
orthogonal decomposition (POD), as well as the
data-driven methods \cite{Benner2017, Antoulas2020}.

Structured systems are used to ensure physically meaningful response 
predictions in practice, and the special structure is often 
associated with the physics that are described by the systems, e.g., symmetries, time 
delays, and high-order time derivatives. There are many works that focus on MOR of the 
specific structured systems, such as port-Hamiltonian systems, second order systems, time 
delay systems and so on \cite{Benner20244, Beattie2009, Long2024}. In some applications, 
when the 
observed 
quantities are expressed as the variance or deviation of 
state variables from a reference point, the output equation of the dynamical systems
exhibits a quadratic structure, referred to systems with quadratic outputs. We consider 
the problem of MOR for linear quadratic outputs (LQO) systems in this paper. 
MOR techniques for LQO systems has been investigated in the past.
The most natural approach is to formulate LQO systems as linear systems with multiple 
outputs by the matrix 
decomposition, followed 
by the standard MOR methods for linear systems \cite{VanBeeumen2010, VanBeeumen2012}.
However, this approach often results in systems with a large number of outputs. The LQO 
system is formulated equivalently as a quadratic 
bilinear (QB) system in \cite{Pulch2019}, and then the BT method for QB 
systems is employed to produce reduced models. In order to enable a direct BT method for 
LQO systems, the Gramians and $H_2$ norm of LQO systems are defined 
in \cite{Benner2021}, and a structure-preserving BT method is developed 
along with an error estimation. The iterative rational Krylov algorithm has also been 
applied to LQO systems \cite{Gosea2019}. Based on the barycentric representations, the 
adaptive Antoulas-Anderson algorithm is extended to develop a data-driven modeling
framework for LQO systems \cite{Gosea2022}. More recently, $H_2$ optimal MOR of LQO 
systems is studied in \cite{Reiter2024}, and the Gramian-based first-order necessary 
conditions for reduced models of LQO systems are provided.

The optimal $H_2$ MOR for linear systems has been studied extensively. The Lyapunov- and 
interpolation-based conditions on the local optimality of reduced models are discussed in 
details in \cite{Gugercin2008}, where an iterative rational Krylov algorithm (IRKA) is 
designed 
to force optimality with respect to a set of interpolation conditions. As reduced models 
that satisfy the necessity 
of $H_2$ 
optimal conditions can be produced within the framework of projection, Riemannian 
optimization techniques are introduced into the field of MOR to minimize the $H_2$ error 
between the original and reduced models \cite{Yan1999,Xu2013,Yu2020}. Because of the 
existence 
of the minimum solution and convergence of Riemannian optimization problem, this approach 
is applied to MOR of other structured systems, such as second order systems, 
linear port-Hamiltonian systems, bilinear systems, and quadratic-bilinear systems 
\cite{Sato2017, Sato2017-2, Xu2015, Jiang2020-1}.  Combining the subspace iteration with 
MOR procedure, an online manifold learning approach is proposed for nonlinear 
dynamical systems in \cite{Peng2014}. The optimal $H_2$ MOR is also employed to 
construct a reduced stable 
positive network system with the preservation of the original interconnection structure 
in \cite{Sato2023-3}. Recently, IRKA is recast as a Riemannian gradient descent method
with a fixed step size over the manifold of rational functions
having fixed degree in \cite{Mlinaric2025}, and a more efficient execution is provided 
from the point of view.  For the theoretical analysis on the iterative algorithms over 
Riemannian manifolds, we refer the reader to \cite{Ring2012,Sato2016a,Huangwen2018}.

We investigate the optimal $H_2$ MOR problem of LQO systems based on Riemannian 
manifold. Note that an LQO system has a nonlinear input-output mapping, even though the 
dynamical equation is linear function of the states. The optimal $H_2$ MOR 
problem is considered first in the 
framework of Galerkin projection, and it can be characterized as an optimization problem 
over the Stiefel manifold. We adopt Riemannian conjugate gradient method to solve the 
related optimization problem, and the Riemannian gradient of the cost function is 
provided explicitly based on a couple of Sylvester equations. The resulting reduced 
models generated iteratively preserve 
the quadratic structure of original systems and are optimal in the sense of $H_2$ norm.  
We then relax slightly the projection methods, and select the coefficient matrices 
associated with the input and the output directly in Euclidean space. As a result, the 
optimization problem about the $H_2$ norm of error systems is described by the 
product 
manifold. Compared with the framework of Stiefel manifold, the larger feasible domain is 
defined by the product manifold, more accurate reduced models can be expected in this 
setting. For the execution of the proposed iterative algorithms, a couple of Sylvester 
equations are involved in each iterate, and solving these equations is time consuming in 
the  
large-scale settings. We provide low-rank approximate solutions to these Sylvester 
equations by fully exploiting their specific structure based on the truncated polynomial 
expansions. Consequently, we just need to solve the high order Sylvester equations only 
one time during the whole iteration, thereby enabling an efficient execution of our 
approach.

The paper is organized as follows. \autoref{sec:sec-2} introduces LQO systems and gives 
the preliminaries on 
Gramians and the $H_2$ norm. We start \autoref{sec:sec-3} with 
the 
formulation of the optimal $H_2$ MOR of LQO systems, and then establish Riemannian 
conjugate gradient method based on Steifel and product manifold, respectively, to 
generate reduced models iteratively, where the Riemannian gradient of the cost function 
is presented explicitly. A low-rank approximation to the solutions of Sylvester equations 
is provided in \autoref{sec:sec-4}.
Numerical examples are used to test our approach in \autoref{sec:sec-5}.
Finally, some conclusions are drawn in \autoref{sec:sec-6}.

\textit{Notation}: We assume that all matrices have compatible dimensions.
The notation $\mathbb{R}^{m\times n}$ represents the set of all $m\times n$ matrices with real entries.
For a matrix $A$, $A^{-1}$ denotes the inverse of $A$ when $A$ is a square matrix and is 
invertible, while $A^\top$ signifies the transpose of $A$. $\mathrm{S_n}$ represents the 
set of 
real symmetric matrices in $\mathbb{R}^{n\times n}$. The operator $\mathrm{vec}(M)$ for a 
given matrix $M$ produces a vector obtained by stacking the rows of $M$ one by one. The 
trace of a square matrix $M$ is denoted by $\mathrm{tr}(M)$. $\otimes$ denotes the 
Kronecker product of two matrices. $\mathrm{sym}(Z)$ denotes the symmetric part of the 
square 
matrix $Z$, that is $\mathrm{sym}(Z)=(Z+Z^{\top})/2$.


\section{Problem statement}\label{sec:sec-2}

We consider linear systems with quadratic outputs characterized by the following 
differential equations 
\begin{equation}\label{eqn-1}
	{\Sigma}: 
	\left\{\begin{array}{l}\dot{x}(t)=Ax(t)+Bu(t),\\y(t)=Cx(t)+x(t)^{\top}Mx(t),\end{array}\right.
\end{equation}
where $x\in\mathbb{R}^n$ is the state, $u(t)\in \mathbb{R}^m$ and $y(t)\in \mathbb{R}$ 
are 
the input and output functions, respectively. $A, M\in\mathbb{R}^{n\times n}$, 
$B\in\mathbb{R}^{n\times m}$, and $C\in\mathbb{R}^{1\times n}$ are the real coefficient
matrices of systems. We consider asymptotic stable LQO systems in this paper, that is, 
all eigenvalues of $A$ possess strictly negative real parts. We suppose that the number 
of input terminals is much smaller than the order of systems $\left(m\ll n\right)$. Note 
that $y(t)$ represents a quadratic output, and $\eqref{eqn-1}$ are 
multiple-input-single-output linear systems.
In general, there holds
\begin{equation}
	x^\top Mx=x^\top\left(\frac{1}{2}(M+M^\top)\right)x\quad\text{for 
	}\;x\in\mathbb{R}^n. \nonumber
\end{equation}
In what follows, we assume that $M=M^\top$ is a symmetric matrix, that is $M\in 
\mathrm{S_n}$. We 
aim to construct structure-preserving reduced models of (\ref{eqn-1}) as follows
\begin{equation}\label{eqn-2}
	\hat{\Sigma}:\left\{\begin{array}{l}\dot{\hat{x}}(t)=\hat{A} \hat{x}(t)+\hat{B} u(t), \\\hat{y}(t)=\hat{C}\hat{x}(t)+\hat{x}(t)^{\top}\hat{M}\hat{x}(t),\end{array}\right.
\end{equation}
where $\hat x(t)\in\mathbb{R}^{r}$, $u(t)\in\mathbb{R}^{m}$ and $\hat y(t)\in\mathbb{R}$ 
along with $r\ll n$. The dynamical behavior of (\ref{eqn-1}) should be approximated 
faithfully by (\ref{eqn-2}) for all admissible inputs $u(t)$, and its stability can 
also be preserved during the process of MOR.

Note that the relationship 
between the input and the state of (\ref{eqn-1}) is associated with linear time-invariant 
systems. The controllability Gramian of (\ref{eqn-1}) can be defined as 
\begin{equation}\label{eqn-3}
	P=\int_0^\infty e^{A\tau}BB^\top e^{A^\top\tau}\mathrm{d}\tau,
\end{equation}
which satisfies the following Lyapunov equation
\begin{equation}\label{eqn-4}
	AP+PA^\top+BB^\top=0.
\end{equation}
For the output in $\eqref{eqn-1}$, the linear part $Cx(t)$ corresponds to the 
observability Gramian
\begin{equation}
	Q_1 = \int_0^\infty e^{A^\top\sigma}C^\top Ce^{A\sigma}\mathrm{d}\sigma, \nonumber
\end{equation}
while the quadratic part ${x(t)}^\top Mx(t)$ corresponds to the observability Gramian
\begin{equation}
	Q_2=\int_0^\infty\int_0^\infty
	e^{A^\top\sigma_1}Me^{A\sigma_2}B\left(e^{A^\top\sigma_1}Me^{A\sigma_2}B\right)^\top
	\mathrm{d}\sigma_1\mathrm{d}\sigma_2, \nonumber
\end{equation}
which is referred as the quadratic-output observability 
Gramian in \cite{Gosea2019,Benner2021}.
As a result, the observability Gramian of (\ref{eqn-1}) is defined as $Q=Q_1+Q_2$, which 
is the unique solution of Lyapunov equation
\begin{equation}\label{eqn-7}
	A^\top Q+QA+C^\top C+MPM=0,
\end{equation}
where $P$ is the controllability Gramian, defined in $\eqref{eqn-4}$. Note that  $Q$ is a 
symmetric matrix due to $M\in\mathrm{S}_n$.

The $H_2$ norm of LQO systems $\eqref{eqn-1}$ is defined as
\begin{equation}
	\|\Sigma\|_{H_2}=\left(\int_0^\infty\|h_1(\sigma)\|_2^2\mathrm{d}\sigma+\int_0^\infty\int_0^\infty\|h_2(\sigma_1,\sigma_2)\|_2^2\mathrm{d}\sigma_1\mathrm{d}\sigma_2\right)^{\frac12},\nonumber
\end{equation}
where $h_1(\sigma)=Ce^{A\sigma}B$ and 
$h_2(\sigma_{1},\sigma_{2})=\mathrm{vec}\left(B^{\top}e^{A^{\top}\sigma_{1}}Me^{A\sigma_{2}}B\right)^{\top}$
are the linear and quadratic kernels, respectively.
It is well known that the $H_2$ norm of (\ref{eqn-1}) can be characterized via Gramians 
as follows
\begin{equation}
	\|\Sigma\|_{H_{2}}=\sqrt{\mathrm{tr}\left(B^{\top}QB\right)},\nonumber
\end{equation}
where $Q$ is the observability Gramian defined in $\eqref{eqn-7}$ 
\cite{Benner2021,Reiter2024}.
We employ the $H_2$ norm as a performance metric for the error induced by MOR of 
(\ref{eqn-1}). Note that the output error $y(t)-\hat{y}(t)$ at any time $t>0$ satisfies 
the inequality
\begin{equation}
	\|y(t)-\hat{y}(t)\|_{L_\infty}^2:=\sup_{t\geq0}\|y(t)-\hat{y}(t)\|_{\infty}\leq\|\Sigma-\hat{\Sigma}\|_{H_2}^2\left(\|u(t)\|_{L_2}^2+\|u(t)\otimes
	 u(t)\|_{L_2}^2\right).\nonumber
\end{equation}
This is to say, for an admissible input $u(t)$, a small $H_2$ error ensures that
the output $\hat y(t)$ of (\ref{eqn-2}) is a high-fidelity approximation
to that of the original systems in the $L_{\infty}$ sense. Consequently, we formulate MOR 
of LQO 
systems as an optimization problem over $H_2$ norm based on Riemannian 
manifold in the next section.

\section{Riemannian optimization for $H_2$ optimal MOR}\label{sec:sec-3}

We now discuss the $H_2$ optimal MOR problem for LQO systems. The $H_2$ norm of the error 
systems is viewed as a function of the coefficient matrices in (\ref{eqn-2}) as follows
\begin{equation}
	J(\hat{A},\hat{B},\hat{C},\hat{M})=\|\Sigma-\hat{\Sigma}\|_{H_2}^2.\nonumber
\end{equation}
We need the realization of the error system for computing the $H_2$ norm. In fact, the 
error 
system between $\eqref{eqn-1}$ and $\eqref{eqn-2}$ is defined by
\begin{equation}\label{eqn-8}
	{\Sigma_e}: (A_e,B_e,C_e,M_e)=
	\left(
	\left[ {\begin{array}{cc}
			A&0\\
			0&\hat{A}
	\end{array}} \right],
	\left[ {\begin{array}{cc}
		B\\\hat{B}
	\end{array}} \right],
	\left[ {\begin{array}{cc}
		C&-\hat{C}
	\end{array}} \right],
	\left[ {\begin{array}{cc}
		M&0\\
		0&-\hat{M}
	\end{array}} \right]
	\right), 
\end{equation}
and its output response reads $y_e=y-\hat{y}$.
Note that $\hat M$ is chosen as a symmetric matrix for reduced models and then 
$M_e\in \mathrm{S}_{n+r}$.
The controllability and observability Gramians $P_e$ and $Q_e$ of (\ref{eqn-8}) 
satisfy Lyapunov equations
\begin{align}
	A_eP_e+P_eA_e^\top+B_eB_e^\top&=0,\nonumber\\
	A_e^\top Q_e+Q_eA_e+C_e^\top C_e+M_eP_eM_e&=0, \nonumber
\end{align}
respectively. According to the structure of coefficient matrices defined in 
(\ref{eqn-8}), we 
partition $P_e$ and $Q_e$ into block forms
\begin{equation}\label{eqn-11}
	P_e=
	\left[ {\begin{array}{cc}
		P&X\\
		X^T&\hat{P}
	\end{array}} \right],\quad
	Q_e=
	\left[ {\begin{array}{cc}
		Q&Y\\
		Y^T&\hat{Q}
	\end{array}} \right].
\end{equation}
A straightforward algebraic manipulation leads to 
\begin{align}
	AX+X{\hat{A}^\top}+B{\hat{B}^\top}&=0,\label{eqn-12}\\
	\hat{A}\hat{P}+\hat{P}{\hat{A}^\top}+\hat{B}{\hat{B}^T}&=0,\label{eqn-13}\\
	A^\top Y+Y\hat{A}-C^\top\hat{C}-MX\hat{M}&=0,\label{Y_eqn}\\
	{\hat{A}^\top}\hat{Q}+\hat{Q}\hat{A}+\hat{C}^\top\hat{C}+\hat{M}\hat{P}\hat{M}&=0.\label{hat_Q_eqn}
\end{align}
Consequently, $J(\hat{A},\hat{B}, \hat{C}, \hat{M})$ can be expressed explicitly as
\begin{equation}
	J(\hat{A},\hat{B}, \hat{C}, 
	\hat{M})=\mathrm{tr}\left(B_e^TQ_eB_e\right)=\mathrm{tr}\left(B^\top 
	QB+2{B^\top}Y\hat{B}+{{\hat{B}}^\top}\hat{Q}\hat{B}\right),\nonumber
\end{equation}
where ${Q}$ is defined in $\eqref{eqn-7}$. We focus on reduced models (\ref{eqn-2}) that 
minimize the $H_2$ norm among the stable reduced models of order $r$. It can be 
formulated as a constrained optimization problem
\begin{equation}\label{eqn-16}
	\min_{{\hat{A}\in\mathbb{R}^{r\times r},\,\hat{A}\;is\;stable\atop\hat{B}\in\mathbb{R}^{r\times m},\,\hat{C}\in\mathbb{R}^{1\times r}}\atop\hat{M}\in S_r}J(\hat{A},\hat{B},\hat{C},\hat{M}).
\end{equation}

\subsection{Minimizing $H_2$-norm error over Stiefel manifold}\label{sec:sec-3.2}

The projection methods are well studied in the community of MOR, such as 
moment-matching, BT, POD and so on. As a special 
case, reduced models satisfying the first-order necessary conditions of $H_2$ optimal 
MOR can be produced in the framework of projection methods 
\cite{Gugercin2008,Reiter2024}. We construct reduced model (\ref{eqn-2}) of order $r$ 
such that it is ${H_2}$-norm optimal in the context of the orthogonal projection. For a 
given matrix $V\in\mathbb{R}^{n \times r}$ with orthogonal columns, i.e., $V^\top 
V=I_r$, the reduced models are determined by 
\begin{equation}\label{red_V}
	\hat{A}=V^\top AV,\ \hat{B}=V^\top B,\ \hat{C}=CV,\ \hat{M}=V^\top MV.
\end{equation}
Now (\ref{eqn-16}) boils down to the selection of a proper projection matrix 
$V\in\mathbb{R}^{n\times r}$ such that the $H_2$ norm error is as small as possible. We 
consider the set composed of all $n\times r$ column-orthogonal matrices
\begin{equation}
	\mathrm{St}(n,r)=\{V|V\in\mathbb{R}^{n\times r},V^\top V=I_r\}. \nonumber
\end{equation}
The set $\mathrm{St}(n,r)$ is the Stiefel manifold, also known as the compact or 
orthogonal Stiefel manifold. Note that the non-compact Stiefel manifold 
$\mathrm{St}_*(n,r)$ is the set of all 
full-rank matrices of order $n\times r$. Both of them are the embedded submanifolds of 
Euclidean space $\mathbb{R}^{n\times r}$. Although the dynamical behavior of 
(\ref{eqn-2}) 
remains unchanged after a state transformation, the projection matrix 
$V\in\mathrm{St}(n,r)$ is adopted typically in practice because of the superior numerical 
performance. The $H_2$ optimal MOR of $\eqref{eqn-1}$ can be characterized via Stiefel 
manifold as an unconstrained optimization problem 
\begin{equation}\label{eqn-19}
	\min_{V\in \mathrm{St}(n,r)}J_1(V)=J(V^\top AV,V^\top B,CV,V^\top MV).
\end{equation}
Note that we drop the constrain that $\hat A$ is stable. Because the $H_2$ norm of 
unstable LQO systems is infinity, the minimization of $H_2$ error ensures the stability 
of reduced models naturally.

In order to solve (\ref{eqn-19}) on Stiefel manifold via numerical methods, we need the 
basic notions of the tangent space, Riemannian metric and the gradient of $J_1(V)$.
Let $T_{V}\mathrm{St}(n,r)$ denote the tangent space to $\mathrm{St}(n,r)$ at $V\in \mathrm{St}(n,r)$.
The Stiefel manifold $\mathrm{St}(n,r)$ is a Riemannian manifold by endowing the tangent 
space $T_{V}\mathrm{St}(n,r)$ with the inner product
\begin{equation}
	\langle\xi_{1},\xi_{2}\rangle_V=\mathrm{tr}(\xi_{2}^\top\xi_{1}),\quad\forall\xi_{1},\xi_{2}\in T_{V}\mathrm{St}(n,r),\nonumber
\end{equation}
where the inner product on $T_{V}\mathrm{St}(n,r)$ is termed as the Riemannian metric, 
and induces a norm $\|\cdot\|_{V}$ on $T_{V}\mathrm{St}(n,r)$. 
The gradient of $J_1(V)$ at the point $V$ is referred as $\mathrm{grad}J_1(V)$, which is 
the unique tangent vector in $T_{V}\mathrm{St}(n,r)$ such that 
\begin{equation}
	\langle\xi,\text{grad}J_1(V)\rangle_V=\text{D}J_1(V)[\xi],\quad\forall\xi\in 
	T_V\text{St}(n,r),\nonumber
\end{equation}
where $\mathrm{D}J_1(V):T_{V}\mathrm{St}(n,r)\mapsto T_{J_1(V)}\mathbb{R}$ is the 
differential map of $J_1(V)$ at $V$. We use the continuation of $J_1(V)$ from Stiefel 
manifold 
$\mathrm{St}(n,r)$ to Euclidean space $\mathbb{R}^{n\times r}$, and define 
\begin{equation}\label{extension_J}
	\bar{J_1}(V)=\mathrm{tr}\big(B^\top 
	QB+2{B^\top}Y\hat{B}+{{\hat{B}}^\top}\hat{Q}\hat{B}\big),\quad 
	V\in\mathbb{R}^{n\times r}.
\end{equation}
It is obvious that $\bar{J_1}|_{\mathrm{St}(n,r)}=J_1$. Let $\mathrm{grad}\bar{J_1}(V)$ 
be the Euclidean gradient of $\bar{J_1}(V)$. The Riemannian gradient of $J_1(V)$ can be 
obtained by projecting $\bar{J_1}(V)$ onto $T_{V}\mathrm{St}(n,r)$, i.e.
\begin{equation}\label{eqn-22}
	\mathrm{grad}J_1(V)=P_V(\mathrm{grad}\bar{J_1}(V)), 
\end{equation}
where $P_V$ is the orthogonal projection onto the tangent space $T_{V}\mathrm{St}(n,r)$ 
\cite{Absil2008}
\begin{equation}
	P_V(D)=D-\frac12V(V^\top D+D^\top V),\quad D\in\mathbb{R}^{n\times r}.\nonumber
\end{equation}

When the linear-search strategy is used for a numerical 
solution to (\ref{extension_J}), the optimization 
variable is updated by $V_{j+1}=V_{j}+t_{j}\eta_{j}$ in the iteration, where $t_{j}$ is 
the step-size and $\eta_{j}$ is a proper direction. However, because $\mathrm{St}(n,r)$ 
is not a linear space, the 
linear-search method can not be applied directly to (\ref{eqn-19}). It is desirable to 
define a retraction map 
$\mathcal{R}_V:T_V\mathrm{St}(n,r)\rightarrow\mathrm{St}(n,r)$.
\newtheorem{definition}{Definition}
\newcommand{\defref}[1]{Definition~\ref{#1}}
\begin{definition}\label{def:1}
	A retraction on a manifold $\mathcal{M}$ is a smooth map 
	$\mathcal{R}:T\mathcal{M}\to\mathcal{M}\colon(x,v)\mapsto\mathcal{R}_x(v)$
	with the following properties:
	\\(i) $\mathcal{R}_x(0_x)=x$, where $0_x$ denotes the zero element of $T_x\mathcal{M}$.
	\\(ii) With the canonical identification $T_{0_x}T_x\mathcal{M}\simeq 
	T_x\mathcal{M}$, $\mathcal{R}_x$ satisfies 
	$D\mathcal{R}_x(0_x)={id}_{T_x\mathcal{M}},$ where ${id}_{T_x\mathcal{M}}$ denotes 
	the identity mapping on $T_x\mathcal{M}$.
\end{definition}
\par
The retraction map $\mathcal{R}_V$ transfers a vector in $T_V\mathrm{St}(n,r)$ to an 
element on $\mathrm{St}(n,r)$. 
Specifically, the retraction map of $\mathrm{St}(n,r)$ can be taken as
\begin{equation}\label{eqn-23}
	\mathcal{R}_V(\eta)=\mathrm{q}(V+\eta),\quad\eta\in T_V\mathrm{St}(n,r)
\end{equation}
where $\mathrm{q}(N)$ denotes the $\mathcal{Q}$ factor of the QR decomposition 
$N=\mathcal{Q}\mathfrak{R}$ for a given matrix
$N\in\mathbb{R}^{n\times r}$.  Note that $\mathcal{Q}\in\mathrm{St}(n,r)$ and 
$\mathfrak{R}$ is an upper triangular $n\times r$ matrix with 
strictly positive diagonal elements. We refer to \cite{Absil2008,Boumal2023} for more 
details 
about retraction map. The Riemannian conjugate gradient method is adopted in this paper 
to solve the optimization problem on manifold. We introduce the vector transport in order 
to update the search direction.
\begin{definition}\label{def:2}
	A vector transport on a manifold $\mathcal{M}$ is a smooth map
	\begin{equation}
		T\mathcal M\oplus T\mathcal M\to T\mathcal M:(\eta_x,\xi_x)\mapsto\mathcal T_{\eta_x}(\xi_x)\in T\mathcal M\nonumber
	\end{equation}
	satisfying the following properties for all $x\in\mathcal{M}$:
	\\(i) (Associated retraction) There exists a retraction $\mathcal{R}$, called the 
	retraction associated with $\mathcal{T}$, such that $\mathcal{T}_{\eta_x}(\xi_x)\in 
	T_{\mathcal{R}_x(\eta_x)}\mathcal{M}$ for $\xi_x,\eta_x\in T_x\mathcal{M}$;
	\\(ii) (Consistency) $\mathcal{T}_{0_x}(\xi_x)=\xi_x$ for $\xi_x\in T_x\mathcal{M}$;
	\\(iii) (Linearity) $\mathcal{T}_{\eta_x}(a\xi_x+b\zeta_x)=a\mathcal{T}_{\eta_x}(\xi_x)+b\mathcal{T}_{\eta_x}(\zeta_x)$. 
\end{definition}
\par
The vector transport is used to transform the elements in a tangent space to another one.
We adopt the following vector transport $\mathcal{T}$ on $\mathrm{St}(n,r)$ 
\begin{equation}
	\begin{aligned}
		\mathcal{T}_{\eta_V}(\xi_V)=&\mathrm{q}(V+\eta_V)\rho_{\mathrm{skew}}\Big({\mathrm{q}(V+\eta_V)}^\top \xi_V{\big({\mathrm{q}(V+\eta_V)}^\top(V+\eta_V)\big)}^{-1}\Big)\\
		&+\big(I_n-\mathrm{q}(V+\eta_V){\mathrm{q}(V+\eta_V)}^\top\big)\xi_V
		{\big({\mathrm{q}(V+\eta_V)}^\top(V+\eta_V)\big)}^{-1},\nonumber
	\end{aligned}
\end{equation}
where $V\in\mathrm{St}(n,r)$, $\xi_V, \eta_V\in T_V\mathrm{St}(n,r)$, and 
$\rho_{\mathrm{skew}}(D)$ returns a skew-symmetric matrix for a given matrix $D$, which 
is defined explicitly as 
\begin{equation}
	\left(\rho_{\text{skew}}(D)\right)_{i,j}=\begin{cases}D_{i,j}&\quad if\ i>j,\\0&\quad if\ i=j,\\-D_{j,i}&\quad if\ i<j,\end{cases}\qquad for\ D\in\mathbb{R}^{r\times r}.\nonumber
\end{equation}

Now we are in a position to employ the Riemannian conjugate gradient method to optimize 
the cost function defined in (\ref{eqn-19}).  Lemma 1 is helpful for the 
calculation of Riemannian gradient of $J_1(V)$. 
\newtheorem{lemma}{Lemma}
\newcommand{\lemmaref}[1]{Lemma~\ref{#1}}
\begin{lemma}\label{lemma:1}
	If $P$ and $Q$ satisfy $AP+PB+X=0$ and $A^\top Q+QB^\top+Y=0$, respectively,
	there holds $\mathrm{tr}(Y^\top P)=\mathrm{tr}(X^\top Q)$.
\end{lemma}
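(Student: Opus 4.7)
The plan is to reduce both sides of the claimed identity to the same expression in $A$, $B$, $P$, $Q$ by invoking the two given equations to eliminate $X$ and $Y$, and then using the cyclic invariance of the trace to match the two.

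First I would use the second equation to write $Y = -A^\top Q - QB^\top$, so that
\begin{equation}
\mathrm{tr}(Y^\top P) = -\mathrm{tr}\bigl((A^\top Q)^\top P\bigr) - \mathrm{tr}\bigl((QB^\top)^\top P\bigr) = -\mathrm{tr}(Q^\top A P) - \mathrm{tr}(B Q^\top P). \nonumber
\end{equation}
By the cyclic property of the trace the second summand becomes $-\mathrm{tr}(Q^\top P B)$, and combining gives
\begin{equation}
\mathrm{tr}(Y^\top P) = -\mathrm{tr}\bigl(Q^\top (AP + PB)\bigr). \nonumber
\end{equation}

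Next I would apply the first equation $AP + PB = -X$ to obtain $\mathrm{tr}(Y^\top P) = \mathrm{tr}(Q^\top X)$, and conclude using $\mathrm{tr}(Q^\top X) = \mathrm{tr}(X^\top Q)$, which follows from the fact that a matrix and its transpose share the same trace.

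I do not expect a real obstacle here: the statement is essentially a duality/adjointness identity for the Sylvester operator $L(P) = AP + PB$, whose formal adjoint with respect to the Frobenius inner product is $L^*(Q) = A^\top Q + Q B^\top$. The only bookkeeping care required is to avoid implicitly assuming $Q$ is symmetric when taking transposes, but the cyclic rearrangement above never needs that.
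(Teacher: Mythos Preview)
Your proof is correct and follows essentially the same route as the paper: solve for $Y$ from the second equation, substitute into $\mathrm{tr}(Y^\top P)$, use cyclic invariance to collect $AP+PB$, then replace it by $-X$ via the first equation. The paper's argument differs only cosmetically in the order of the trace rearrangements.
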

\begin{proof}
	It follows that $X=-(AP+PB), Y=-(A^\top Q+QB^\top)$. Due to the properties of the 
	trace function, it yields
	\begin{align}
		\mathrm{tr}(Y^\top P)&=-\mathrm{tr}\big(({A^\top Q+QB^\top})^\top P\big)\nonumber
		\\&=-\mathrm{tr}(Q^\top AP)-\mathrm{tr}(BQ^\top P)\nonumber
		\\&=-\mathrm{tr}\big((AP+PB)Q^\top\big)\nonumber
		\\&=\mathrm{tr}(XQ^\top)=\mathrm{tr}(Q^\top X),\nonumber
	\end{align}
	which concludes the proof. 
\end{proof}

The Riemannian gradient $\mathrm{grad}{J_1}(V)$ can be derived via the following theorem.
\newtheorem{theorem}{Theorem}
\begin{theorem}
	Consider LQO systems $\eqref{eqn-1}$ and reduced models $\eqref{eqn-2}$, which is 
	defined by (\ref{red_V}). If $A$ and $\hat A$ are stable matrices, the Riemannian 
	gradient of $J_1(V)$ with respect to $V\in \mathrm{St}(n, r)$ is expressed as 
	\begin{equation}\label{eqn-26}
		\mathrm{grad}J_1(V)=\mathrm{grad}\bar{J_1}(V)-\frac{1}{2}V\Big(V^T\mathrm{grad}\bar{J_1}(V)+\big(\mathrm{grad}\bar{J_1}(V)\big)^TV\Big),
	\end{equation}
	where the Euclidean gradient of $\bar{J_1}(V)$ with 
	respect to $V\in\mathbb{R}^{n\times r}$ is formulated as 
	\begin{align}\label{grad_J_E}
		\mathrm{grad}\bar{J_1}(V)=&2\big(A^\top V(X^\top K+\hat{P}L)^\top+AV(X^\top K+\hat{P}L)+B(\hat{B}^\top L+B^\top K)\nonumber\\
		&+C^\top(\hat{C}\hat{P}-CX)+2MV(\hat{P}\hat{M}\hat{P}-X^\top MX)\big),
	\end{align}
	in which $X$, $\hat{P}$ are determined by (\ref{eqn-12}) and (\ref{eqn-13}), and $K$, 
	$L$ 
	satisfy the Sylvester equations
	\begin{align}
		A^\top K+K\hat{A}-C^\top\hat{C}-2MX\hat{M}=&0,\label{eqn-27}\\
		\hat{A}^\top L+L\hat{A}+\hat{C}^\top\hat{C}+2\hat{M}\hat{P}\hat{M}=&0.\label{eqn-28}
	\end{align}
\end{theorem}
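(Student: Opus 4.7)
My plan is to establish \eqref{eqn-26} first and then derive the Euclidean gradient \eqref{grad_J_E}, which is where essentially all of the work lies. Equation \eqref{eqn-26} follows immediately from the projection identity \eqref{eqn-22} together with the explicit form of $P_V$ quoted just above, so I will handle it by a single substitution.

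For the Euclidean gradient, I would begin by using Lemma~\ref{lemma:1} to rewrite $\bar{J_1}(V)$ in a form that avoids $Y$ and $\hat Q$ entirely. Pairing \eqref{eqn-12} with \eqref{Y_eqn} gives $\mathrm{tr}(B^\top Y\hat B)=-\mathrm{tr}(CX\hat C^\top)-\mathrm{tr}(MX\hat M X^\top)$, and pairing \eqref{eqn-13} with \eqref{hat_Q_eqn} gives $\mathrm{tr}(\hat B^\top\hat Q\hat B)=\mathrm{tr}(\hat C\hat P\hat C^\top)+\mathrm{tr}(\hat M\hat P\hat M\hat P)$. Substituting produces an equivalent expression for $\bar{J_1}(V)$ in which only $X$, $\hat P$, $\hat C$ and $\hat M$ carry the $V$-dependence. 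Differentiating this expression along a generic $\xi\in\mathbb R^{n\times r}$ then produces terms linear in $\mathrm D X[\xi]$, $\mathrm D\hat P[\xi]$, $\mathrm D\hat C[\xi]$ and $\mathrm D\hat M[\xi]$. Because $X$ enters bilinearly through $\mathrm{tr}(MX\hat M X^\top)$, and $\hat P$ analogously through $\mathrm{tr}(\hat M\hat P\hat M\hat P)$, the symmetry of $M$ and $\hat M$ doubles those contributions, which is exactly what ultimately forces the factor of two on $MX\hat M$ in \eqref{eqn-27} and on $\hat M\hat P\hat M$ in \eqref{eqn-28}.

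The remaining step is to eliminate the awkward quantities $\mathrm D X[\xi]$ and $\mathrm D\hat P[\xi]$. Differentiating \eqref{eqn-12} yields a Sylvester equation for $\mathrm D X[\xi]$ whose right-hand side is linear in $\mathrm D\hat A[\xi]$ and $\mathrm D\hat B[\xi]$; applying Lemma~\ref{lemma:1} with this equation paired against \eqref{eqn-27} converts the trace coefficient of $\mathrm D X[\xi]$ into one involving only $K$, $\mathrm D\hat A[\xi]$ and $\mathrm D\hat B[\xi]$. The same device, now with \eqref{eqn-13} paired against \eqref{eqn-28}, trades $\mathrm D\hat P[\xi]$ for $L$. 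Substituting $\mathrm D\hat A[\xi]=\xi^\top AV+V^\top A\xi$, $\mathrm D\hat B[\xi]=\xi^\top B$, $\mathrm D\hat C[\xi]=C\xi$, $\mathrm D\hat M[\xi]=\xi^\top MV+V^\top M\xi$, and collapsing everything into the Frobenius form $\mathrm{tr}(G^\top\xi)$ using cyclic trace identities together with the symmetry of $\hat P$, $\hat M$ and $L$, allows one to read off $G=\mathrm{grad}\bar{J_1}(V)$, which should match \eqref{grad_J_E}. I expect the main obstacle to be purely organizational: matching the two applications of Lemma~\ref{lemma:1} to the right Sylvester pairs so that $K$ and $L$ emerge with exactly the coefficients in \eqref{eqn-27}-\eqref{eqn-28}, and tracking transposes carefully when assembling the final expression into $\mathrm{tr}(G^\top\xi)$.
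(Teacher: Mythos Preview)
Your proposal is correct and takes a genuinely different route from the paper. The paper differentiates $\bar{J_1}(V)=\mathrm{tr}(B^\top QB+2B^\top Y\hat B+\hat B^\top\hat Q\hat B)$ as written, which forces it to work with $\mathrm{D}Y[\xi]$ and $\mathrm{D}\hat Q[\xi]$; it then differentiates \eqref{Y_eqn} and \eqref{hat_Q_eqn}, applies Lemma~\ref{lemma:1} once to trade $\mathrm{D}Y[\xi]$ and $\mathrm{D}\hat Q[\xi]$ for expressions involving $\mathrm{D}X[\xi]$ and $\mathrm{D}\hat P[\xi]$, and applies the lemma a second time against auxiliary Sylvester equations for matrices $R_1$ and $R_2$ to eliminate those; only at the end do $K=Y-R_1$ and $L$ emerge by combining the pieces. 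Your plan instead invokes Lemma~\ref{lemma:1} \emph{before} differentiating, removing $Y$ and $\hat Q$ from the cost function altogether, so that after differentiation only $\mathrm{D}X[\xi]$ and $\mathrm{D}\hat P[\xi]$ appear; a single further application of the lemma, paired directly against \eqref{eqn-27} and \eqref{eqn-28}, then eliminates them. Your route is shorter (no $R_1$, $R_2$, no need to differentiate the $Y$ and $\hat Q$ equations) and makes it transparent why $K$ and $L$ carry the factors $2MX\hat M$ and $2\hat M\hat P\hat M$: they are exactly the adjoint variables matching the coefficients of $\mathrm{D}X[\xi]$ and $\mathrm{D}\hat P[\xi]$ after the bilinear terms are doubled by symmetry. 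The paper's approach, on the other hand, exhibits the relation $K=Y-R_1$ explicitly, which gives some structural insight into how $K$ decomposes into pieces coming from the observability-type data.
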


\begin{proof}
	For any matrix $\xi\in\mathbb{R}^{n\times r}$, differentiating both sides of 
	(\ref{extension_J}) leads to 
	\begin{align}
		\mathrm{D}\bar{J_1}(V)[\xi]&=\mathrm{tr}(2B^\top\mathrm{D}Y[\xi]\hat{B}+2B^\top Y{\xi}^\top B+B^\top\xi\hat{Q}\hat{B}+{\hat{B}}^\top\mathrm{D}\hat{Q}[\xi]\hat{B}+{\hat{B}}^\top\hat{Q}{\xi}^\top B)\nonumber\\
		&=2\mathrm{tr}\big(\xi^\top(BB^\top Y+B{\hat{B}}^\top\hat{Q})\big)+2\mathrm{tr}(B^\top\mathrm{D}Y[\xi]\hat{B})+\mathrm{tr}({\hat{B}}^\top\mathrm{D}\hat{Q}[\xi]\hat{B}),\label{eqn-29}
	\end{align}
	where $\mathrm{D}Y[\xi]$ and $\mathrm{D}\hat{Q}[\xi]$ is obtained by differentiating 
	(\ref{Y_eqn}) and (\ref{hat_Q_eqn}), respectively. It yields 
	\begin{align}
		A^\top\mathrm{D}Y[\xi]+\mathrm{D}Y[\xi]\hat{A}+N_1&=0,\label{eqn-30}\\
		\hat{A}^\top\mathrm{D}\hat{Q}[\xi]+\mathrm{D}\hat{Q}[\xi]\hat{A}+N_2&=0,\label{eqn-31}
	\end{align}
	in which
	\begin{align}
		N_1=&Y\xi^\top AV+YV^\top A\xi-C^\top C\xi-M\mathrm{D}X[\xi]\hat{M}-MX\xi^\top MV-MXV^\top M\xi,\nonumber\\
		N2=&\xi^\top A^\top V\hat{Q}+V^\top A^\top\xi\hat{Q}+\hat{Q}\xi^\top AV+\hat{Q}V^\top A\xi+{\xi}^\top C^\top\hat{C}+{\hat{C}}^\top C\xi+\hat{M}\mathrm{D}\hat{P}[\xi]\hat{M}\nonumber\\
		&+\xi^\top MV\hat{P}\hat{M}+V^\top M\xi\hat{P}\hat{M}+\hat{M}\hat{P}\xi^\top 
		MV+\hat{M}\hat{P}V^\top M\xi. \nonumber
	\end{align}
	Similarly, we differentiate (\ref{eqn-12}) and (\ref{eqn-13}) to derive 
	$\mathrm{D}X[\xi]$ and $\mathrm{D}\hat{P}[\xi]$ contained in the 
	above expression. There hold 
	\begin{equation}\label{eqn-32}
	A\mathrm{D}X[\xi]+\mathrm{D}X[\xi]\hat{A}^\top+X\xi^\top A^\top V+XV^\top A^\top\xi+BB^T\xi=0,
	\end{equation}
	\begin{equation}\label{eqn-33}
	\hat{A}\mathrm{D}\hat{P}[\xi]+\mathrm{D}\hat{P}[\xi]\hat{A}^\top+R=0,
	\end{equation}
	where 
	\begin{equation}
		R=\xi^\top AV\hat{P}+V^\top A\xi\hat{P}+\hat{P}\xi^\top A^\top V+\hat{P}V^\top A^\top\xi+\xi^\top B\hat{B}^\top+\hat{B}B^\top\xi.\nonumber
	\end{equation}

	 Consider Sylvester equations $\eqref{eqn-12}$ and $\eqref{eqn-30}$. It follows from 
	 \lemmaref{lemma:1} that 
	\begin{align}
		\mathrm{tr}(B^\top\mathrm{D}Y[\xi]\hat{B})=&\mathrm{tr}((B\hat{B}^\top)^\top\mathrm{D}Y[\xi])=\mathrm{tr}(X^\top N_1)\nonumber\\
		=&\mathrm{tr}\big(\xi^\top(AVX^\top Y+A^\top VY^\top X-C^\top CX-2MVX^\top MX)\big)\nonumber\\&-\mathrm{tr}(X^\top M\mathrm{D}X[\xi]\hat{M}).\label{eqn-34}
	\end{align}
	With the auxiliary Sylvester equation
	\begin{equation}\label{eqn-35}
		A^\top R_1+R_1\hat{A}+MX\hat{M}=0, 
	\end{equation}
	we apply \lemmaref{lemma:1} to $\eqref{eqn-32}$ and $\eqref{eqn-35}$
	\begin{align}
		\mathrm{tr}(X^\top M\mathrm{D}X[\xi]\hat{M})&=\mathrm{tr}\big((MX\hat{M})^\top\mathrm{D}X[\xi]\big)\nonumber\\
		&=\mathrm{tr}\big(R_1^\top(X\xi^\top A^\top V+XV^\top A^\top\xi+BB^T\xi)\big)\nonumber\\
		&=\mathrm{tr}\big(\xi^\top(AVX^\top R_1+A^\top VR_1^\top X+BB^\top R_1)\big), 
		\nonumber
	\end{align}
	and then $\eqref{eqn-34}$ boils down to
	\begin{equation}\label{eqn-37}
	    \begin{aligned}
	        \mathrm{tr}(B^\top\mathrm{D}Y[\xi]\hat{B})=&\mathrm{tr}\big(\xi^\top(AVX^\top(Y-R_1)+A^\top V(Y-R_1)^\top X\\
	        &-BB^\top R_1-C^\top CX-2MVX^\top MX)\big).
	    \end{aligned}
	\end{equation}
	We use the notation $K=Y-R_1$, which satisfies 
	\begin{equation}
		A^\top K+K\hat{A}-C^\top\hat{C}-2MX\hat{M}=0,\nonumber
	\end{equation}
	and $\eqref{eqn-37}$ is reformulated as
	\begin{equation}\label{eqn-38}
		\begin{aligned}
		    \mathrm{tr}(B^\top\mathrm{D}Y[\xi]\hat{B})=&\mathrm{tr}\big(\xi^\top(AVX^\top K+A^\top VK^\top X\\
		    &-BB^\top (Y-K)-C^\top CX-2MVX^\top MX)\big).
		\end{aligned}
	\end{equation}

	Similarly, by applying \lemmaref{lemma:1} to $\eqref{eqn-13}$, $\eqref{eqn-31}$, 
	$\eqref{eqn-33}$, one can validate that 
	\begin{equation}\label{eqn-39}
		\begin{aligned}
		    \mathrm{tr}({\hat{B}}^\top\mathrm{D}\hat{Q}[\xi]\hat{B})=
		    &2\mathrm{tr}(\xi^\top(AV\hat{P}L+A^\top VL\hat{P}\\
		    &+B\hat{B}^\top R_2+C^\top\hat{C}\hat{P}+2MV\hat{P}\hat{M}\hat{P})),
		\end{aligned}
	\end{equation}
	where $L$ satisfies \eqref{eqn-28} and $R_2$ solves the Sylvester equation 
	\begin{equation}
		\hat{A}^\top R_2+R_2\hat{A}+\hat{M}\hat{P}\hat{M}=0.\label{R_equation}
	\end{equation}
	Substituting $\eqref{eqn-38}$ and $\eqref{eqn-39}$ into $\eqref{eqn-29}$ gives
	\begin{align}
		\mathrm{D}\bar{J_1}(V)[\xi]=&2\mathrm{tr}\big(\xi^\top(AV(X^\top K+\hat{P}L)+A^\top V(X^\top K+\hat{P}L)^\top+B(\hat{B}^\top L+B^\top K)\nonumber\\
		&+C^\top(\hat{C}\hat{P}-CX)+2MV(\hat{P}\hat{M}\hat{P}-X^\top MX))\big).\nonumber
	\end{align}
    Because of  
	$\mathrm{D}\bar{J_1}(V)[\xi]=\mathrm{tr}(\xi^\top\mathrm{grad}\bar{J_1}(V))$, the 
	above formula of $\mathrm{D}\bar{J_1}(V)[\xi]$ implies the gradient 
	$\mathrm{grad}\bar{J_1}(V)$ given in (\ref{grad_J_E}). Finally, the Riemannian 
	gradient $\mathrm{grad}J_1(V)$ is derived by using the projection defined in  
	$\eqref{eqn-22}$. This completes the proof.
\end{proof}

We use Dai-Yuan-type Riemannian conjugate gradient method provided in 
\cite{Sato2016a} to optimize the $H_2$ norm of reduced models. Let $V_k$ be a current 
point 
on $\mathrm{St}(n,r)$. The next point in the iteration can be obtained via the retraction 
map 
\begin{equation}
	V_{k+1}=\mathcal{R}_{V_k}(t_k\eta_k),\nonumber
\end{equation}
where $\eta_k$  is the conjugate gradient direction along with a proper step size 
$t_k>0$ for $k=0,1, \cdots$. The search direction $\eta_{k}$ at the current iterate in 
Euclidean space is 
updated typically via 
\begin{equation*}
	\eta_{k}=-\mathrm{grad}J_1(V_{k})+\beta_{k}\eta_{k-1}.
\end{equation*}
For the optimization problem on Stiefel manifold, the search direction can be derived 
with the vector transport $\mathcal{T}$ defined in \defref{def:2} as follows 
\begin{equation}
	\eta_{k}=-\mathrm{grad}J_1(V_{k})+\beta_{k}\mathcal{T}_{t_{k-1}\eta_{k-1}}(\eta_{k-1})\nonumber
\end{equation}
with Dai–Yuan-type parameter
\begin{equation}\label{eqn-43}
	\beta_{k}=\frac{\|\mathrm{grad}J_1(V_{k})\|_{V_{k}}^2}{\langle\mathrm{grad}J_1(V_{k}),
		\mathcal{T}_{t_{k-1}\eta_{k-1}}(\eta_{k-1})\rangle_{V_{k}}-\langle 
		\mathrm{grad}J_1(V_{k-1}),\eta_{k-1}\rangle_{V_{k-1}}},
\end{equation}
for $k=1,2,\cdots$.
In practice the deflated vector 
transport $\tilde{\mathcal{T}}$, instead of $\mathcal{T}$, is used for the selection of 
$\eta_k$
\begin{equation}\label{eqn-44}
	\tilde{\mathcal{T}}_{t_{k-1}\eta_{k-1}}(\eta_{k-1})=
	\min\left\{1,\frac{\|\eta_{k-1}\|_{V_{k-1}}}{\|\mathcal{T}_{t_{k-1}\eta_{k-1}}
		(\eta_{k-1})\|_{V_{k}}}\right\}\mathcal{T}_{t_{k-1}\eta_{k-1}}(\eta_{k-1}),
\end{equation}
which guarantees the inequality
\begin{equation}
	\|\tilde{\mathcal{T}}_{\alpha_{k-1}\eta_{k-1}}(\eta_{k-1})\|_{V_{k}}\leq\|\eta_{k-1}\|_{V_{k-1}}\nonumber
\end{equation}
and thereby leads to the convergence of Riemannian conjugate gradient methods. Note that 
$\beta_0=0$ and $\eta_{0}=-\mathrm{grad}J_1(V_{0})$ for the initial direction. We use
Wolfe conditions to select the step size $t_{k}=\omega^{m_{k}}\gamma$, where 
$\gamma>0$, 
$\omega\in(0,1)$, and $m_k$ is the smallest non-negative integer satisfying
\begin{equation}\label{eqn-45}
	J_1(\mathcal{R}_{V_k}(\omega^{m_k}\gamma\eta_k))\leq J_1(V_k)+c_1\omega^{m_k}\gamma\langle\mathrm{grad}J_1(V_k),\eta_k\rangle_{V_k},
\end{equation}
\begin{equation}\label{eqn-46}
	\langle\mathrm{grad}J_1(\mathcal{R}_{V_k}(\omega^{m_k}\gamma\eta_k)),
	\mathcal{T}_{t_k\eta_k}(\eta_k)\rangle_{\mathcal{R}_{V_k}(\omega^{m_k}\gamma\eta_k)}\geq
	 c_2\langle\mathrm{grad}J_1(V_k),\eta_k\rangle_{V_k},
\end{equation}
with $0<c_1<c_2<1$. The iterative algorithm for the optimization problem (\ref{eqn-19}) 
the is presented in Algorithm 1.

\begin{algorithm}
	\renewcommand{\algorithmicrequire}{\textbf{Input:}}
	\renewcommand{\algorithmicensure}{\textbf{Output:}}
	\caption{Riemannian conjugate gradient method for MOR based on $\mathrm{St}(n,r)$ 
	(SRCG).}
	\label{alg:1}
	\begin{algorithmic}
		\Require The coefficient matrices of LQO system $\Sigma$, and a positive integer 
		$k_{max}$. 
		\Ensure Reduced LQO models $\hat\Sigma$.
		\\Choose a proper initial projection matrix $V_0\in\mathrm{St}(n,r)$.
		\\Compute the Riemannian gradient $\mathrm{grad}J_1(V_0)$ by $\eqref{eqn-26}$, 
		and set $\eta_0=-\mathrm{grad}J_1(V_0)$.
		\For{$k=0,1,\cdots, k_{max}-1$}
		\\\quad Choose a step size $t_k$ satisfying $\eqref{eqn-45}$ and $\eqref{eqn-46}$ 
		.
		\\\quad Set $V_{k+1}=\mathcal{R}_{V_k}(t_k\eta_k)$, and compute $\mathrm{grad}J_1(V_{k+1})$.
		\\\quad Compute $\tilde {\mathcal{T}}_{t_k\eta_k}(\eta_k)$ and $\beta_{k+1}$ by 
		$\eqref{eqn-44}$ and $\eqref{eqn-43}$, respectively.
		\\\quad Set 
		$\eta_{k+1}=-\mathrm{grad}J_1(V_{k+1})+\beta_{k+1}\tilde 
		{\mathcal{T}}_{t_k\eta_k}(\eta_k)$.
		\EndFor
		\\\textbf{return} $\hat{A}=V_{k_{max}}^\top AV_{k_{max}}$, 
		$\hat{B}=V_{k_{max}}^\top B$, 
		$\hat{C}=CV_{k_{max}}$, $\hat{M}=V_{k_{max}}^\top MV_{k_{max}}.$
	\end{algorithmic}  
\end{algorithm}

In theory, the iteration in Algorithm 1 should be executed until the local optimality 
conditions are 
fulfilled. As pointed out in Chapter 4 of \cite{Boumal2023}, any local 
minimizer $V\in 
\mathrm{St}(n,r)$ of the cost function is a critical point on which the norm of 
Riemannian 
gradient is zero, that is $\|\mathrm{grad}J_1(V)\|_{V}=0$. However, one can terminate 
the iteration in practice if the 
objective function in \eqref{eqn-19} decreases sufficiently or becomes small enough, or 
simply stops when the number of the iteration reaches a prespecified bound $k_{max}$.

Note that $\mathrm{tr}(B^\top QB)$ is a constant in the optimization problem 
\eqref{eqn-19}, which can be dropped directly in practice to avoid the calculation of the 
observability Gramian $Q$. In 
addition, 
the calculation of $\mathrm{grad}J_1(V_{k+1})$ in each iterate involves a couple of 
Sylvester equations \eqref{eqn-12} \eqref{eqn-13} \eqref{eqn-27} and \eqref{eqn-28}, 
which dominates the whole cost of \autoref{alg:1}. A scheme based on the polynomial 
expansion 
will be provided in next section to derive the approximate solution of the related 
Sylvester equations, which reduces the cost of the proposed algorithm dramatically.

\subsection{Extension to optimization problem over the product 
manifold}\label{sec:sec-3.4}

We have presented Algorithm 1 to optimize the $H_2$ norm of error systems in the 
framework of Galerkin approach. In this subsection we relax the projection 
methods slightly and optimize the coefficient matrices associated with the input and the 
output 
functions directly. Specifically, we assume $\hat{A}=U^\top AU$ in \eqref{eqn-2}, which 
is determined by $U\in\mathrm{St}(n,r)$ with
orthogonal columns, and select $\hat{B}$, $\hat{C}$ and $\hat{M}$ directly in Euclidean 
space, instead of restricting them into the framework of projection methods. In this 
settings, the $H_2$ optimal MOR problem $\eqref{eqn-16}$ can be described using the 
product manifold. Note that $\mathbb{R}^{r\times m}$, $\mathbb{R}^{1\times r}$ and 
$\mathrm{S_r}$ 
are all linear spaces, possessing a natural linear manifold structure. We employ the  
product manifold
\begin{equation}
	\mathcal{N}=\mathrm{St}(n,r)\times\mathbb{R}^{r\times m}\times\mathbb{R}^{1\times 
	r}\times \mathrm{S_r}, \nonumber
\end{equation}
and the $H_2$ optimal MOR problem \eqref{eqn-16} is characterized as follows 
\begin{equation}\label{eqn-47}
	\min_{(U,\hat{B},\hat{C},\hat{M})\in\mathcal{N}}\{J_2(U,\hat{B},\hat{C},\hat{M})=J(U^\top
	 AU,\hat{B},\hat{C},\hat{M})\}.
\end{equation}
It is clear that the feasible domain of $\eqref{eqn-47}$ is lager than that of  
$\eqref{eqn-19}$, and there holds 
\begin{equation}
	\mathrm{min}\{J_{2}\}\leq \mathrm{min}\{J_{1}\}.\nonumber
\end{equation}

We define the Riemannian metric for the product manifold $\mathcal{N}$
\begin{equation}
	\begin{split}
	&\langle(U_1^\prime,B_1^\prime,C_1^\prime,M_1^\prime),(U_2^\prime,B_2^\prime,C_2^\prime,M_2^\prime)
	\rangle_{(U,\hat{B},\hat C,\hat{M})}\\
	=&\mathrm{tr}({U_1^\prime}^\top 
	U_2^\prime)+\mathrm{tr}({B_1^\prime}^\top B_2^\prime)+\mathrm{tr}({C_1^\prime}^\top 
	C_2^\prime)+\mathrm{tr}({M_1^\prime}^\top M_2^\prime)\nonumber
	\end{split}
\end{equation}
for 
$(U_1^\prime,B_1^\prime,C_1^\prime,M_1^\prime),(U_2^\prime,B_2^\prime,C_2^\prime,M_2^\prime)\in
 T_{(U,\hat{B},\hat{C},\hat{M})}\mathcal{N}$, where 
$T_{(U,\hat{B},\hat{C},\hat{M})}\mathcal{N}$ denotes the tangent space of $\mathcal{N}$ 
at the point $(U,\hat{B},\hat{C},\hat{M})$.
Obviously, $\mathcal{N}$ can be regarded as a Riemannian submanifold of the linear space 
$\bar{\mathcal{N}}=\mathbb{R}^{n\times r}\times\mathbb{R}^{r\times 
m}\times\mathbb{R}^{1\times 
r}\times\mathbb{R}^{r\times r}$ along with a natural inner product \cite{Absil2008}.
The linear spaces $\mathbb{R}^{r\times m}$, $\mathbb{R}^{1\times r}$ and $\mathrm{S_r}$ 
possess a linear manifold structure, and the tangent spaces satisfy 
\begin{equation}\label{eqn-48}
	T_{\hat{B}}\mathbb{R}^{r\times m}\simeq\mathbb{R}^{r\times m},\quad 
	T_{\hat{C}}\mathbb{R}^{1\times r}\simeq\mathbb{R}^{1\times r}\quad\text{and}\quad 
	T_{\hat{M}}\mathrm{S_r}\simeq \mathrm{S_r}.
\end{equation}
As a consequence, the orthogonal projection from $\bar{\mathcal{N}}$ onto 
$T_{(U,\hat{B},\hat{C},\hat{M})}\mathcal{N}$ at the point $(U,\hat{B},\hat{C},\hat{M})\in 
\mathcal{N}$ 
is defined by 
\begin{equation}
	P_{(U,\hat{B},\hat{C},\hat{M})}(\bar{U},\bar{B},\bar{C},\bar{M})=
	(P_U(\bar{U}),\bar{B},\bar{C},\mathrm{sym}(\bar{M})),\nonumber
\end{equation}
where $(\bar{U},\bar{B},\bar{C},\bar{M})\in\bar{\mathcal{N}}$,
and $P_U$ is defined by 
\begin{equation}\label{pro_U}
	P_U(\bar{U})=\bar{U}-U(U^\top\bar{U}+\bar{U}^\top 
U)/2.
\end{equation} 
It follows that a retraction 
$\mathcal{R}_{(U,\hat{B},\hat{C},\hat{M})}$ on $\mathcal{N}$ reads
\begin{equation}
	\begin{aligned}
		\mathcal{R}_{(U,\hat{B},\hat{C},\hat{M})}(U^\prime,B^\prime,C^\prime,M^\prime)=
		&(\mathrm{q}(U+U^\prime),\hat{B}+B^\prime,\hat{C}+C^\prime,\hat{M}+M^\prime)\nonumber
	\end{aligned}
\end{equation}
for $(U^\prime,B^\prime,C^\prime,M^\prime)\in 
T_{(U,\hat{B},\hat{C},\hat{M})}\mathcal{N}$.
The following theorem gives an expression for the Riemannian gradient of $J_2(U,\hat{B},\hat{C},\hat{M})$.
\begin{theorem}
	Consider LQO systems $\eqref{eqn-1}$ and reduced models $\eqref{eqn-2}$, which are 
	determined by the product manifold $\mathcal{N}$. Let $X$, $\hat{P}$, $K$ and $L$ be 
	the solutions of the equations $\eqref{eqn-12}$, $\eqref{eqn-13}$, $\eqref{eqn-27}$ 
	and $\eqref{eqn-28}$, respectively.
	Then the Riemannian gradient of $J_2(U,\hat{B},\hat{C},\hat{M})$ is computed as
	\begin{equation}\label{eqn-51}
		\begin{split}
			&\mathrm{grad}J_2(U,\hat{B},\hat{C},\hat{M})\\
			=&2(P_U(A^\top U(X^\top K+\hat{P}L)^\top+AU(X^\top K+\hat{P}L)),
			K^\top B+L\hat{B},\hat{C}\hat{P}-CX,\hat{P}\hat{M}\hat{P}-X^\top MX).
		\end{split}
	\end{equation}
where $P_U$ is defined in \eqref{pro_U}.
\end{theorem}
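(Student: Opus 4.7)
The plan is to adapt the same submanifold-differentiation strategy used for Theorem~1. First extend $J_2$ to the ambient linear space $\bar{\mathcal{N}}=\mathbb{R}^{n\times r}\times\mathbb{R}^{r\times m}\times\mathbb{R}^{1\times r}\times\mathbb{R}^{r\times r}$ by the same formula
\begin{equation}
\bar{J_2}(U,\hat B,\hat C,\hat M)=\mathrm{tr}\bigl(B^\top QB+2B^\top Y\hat B+\hat B^\top\hat Q\hat B\bigr),\nonumber
\end{equation}
where $X,\hat P,Y,\hat Q$ still solve the Sylvester equations \eqref{eqn-12}, \eqref{eqn-13}, \eqref{Y_eqn}, \eqref{hat_Q_eqn} with $\hat A=U^\top AU$ but are now regarded as functions of all four unconstrained variables. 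I will then compute the four partial Euclidean gradients of $\bar J_2$ independently by taking directional derivatives along $\xi_U\in\mathbb{R}^{n\times r}$, $\xi_B\in\mathbb{R}^{r\times m}$, $\xi_C\in\mathbb{R}^{1\times r}$, and $\xi_M\in\mathbb{R}^{r\times r}$, and finally project componentwise onto $T_{(U,\hat B,\hat C,\hat M)}\mathcal{N}$ via $P_{(U,\hat B,\hat C,\hat M)}$.

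For the $U$-slot the key observation is that in the product-manifold formulation $\hat B,\hat C,\hat M$ no longer depend on $U$, so only the chain of dependencies generated by $\hat A=U^\top AU$ survives. The derivation of Theorem~1 then runs through verbatim after deleting the terms in $N_1,N_2$ coming from the $V$-dependences $\hat B=V^\top B$, $\hat C=CV$, $\hat M=V^\top MV$, and also the $BB^\top\xi$-term in \eqref{eqn-32} (which originated from $\hat B=V^\top B$). What is left of the Lemma~1 cascade, with the same auxiliary Sylvester equations \eqref{eqn-35} and \eqref{R_equation}, yields exactly the first two summands of \eqref{grad_J_E}, namely $2\bigl(A^\top U(X^\top K+\hat PL)^\top+AU(X^\top K+\hat PL)\bigr)$; applying the Stiefel projection $P_U$ then recovers the $U$-component of \eqref{eqn-51}. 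For the $\hat B$-slot (with $U$ now held fixed) the variable enters $\bar J_2$ directly through the quadratic form and indirectly through the two chains $\xi_B\mapsto\mathrm{D}X\mapsto\mathrm{D}Y$ via \eqref{eqn-12}, \eqref{Y_eqn} and $\xi_B\mapsto\mathrm{D}\hat P\mapsto\mathrm{D}\hat Q$ via \eqref{eqn-13}, \eqref{hat_Q_eqn}. Applying Lemma~1 exactly as in the derivation of \eqref{eqn-38} and \eqref{eqn-39}, the two indirect contributions collapse to $-2\mathrm{tr}(\xi_B^\top R_1^\top B)$ and $2\mathrm{tr}(\xi_B^\top R_2\hat B)$. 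Combining them with the direct contributions $2\mathrm{tr}(\xi_B^\top Y^\top B)+2\mathrm{tr}(\xi_B^\top\hat Q\hat B)$ and using the identities $K=Y-R_1$ and $L=\hat Q+R_2$ (which follow at once from \eqref{eqn-27}, \eqref{eqn-28}, \eqref{eqn-35}, and \eqref{R_equation}) produces the $\hat B$-gradient $2(K^\top B+L\hat B)$. The $\hat C$- and $\hat M$-slots are strictly simpler, since each appears only as a source term in the Sylvester equations for $Y$ and $\hat Q$: $\hat C$ through $-C^\top\hat C$ in \eqref{Y_eqn} and $\hat C^\top\hat C$ in \eqref{hat_Q_eqn}, and $\hat M$ through $-MX\hat M$ and $\hat M\hat P\hat M$. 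A single application of Lemma~1 per slot, using \eqref{eqn-12} and \eqref{eqn-13} respectively, yields $2(\hat C\hat P-CX)$ and $2(\hat P\hat M\hat P-X^\top MX)$.

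To pass from Euclidean to Riemannian gradients observe that $\mathbb{R}^{r\times m}$ and $\mathbb{R}^{1\times r}$ are linear and so their tangent projections act as the identity; the $\hat M$-gradient is manifestly symmetric because $M$, $\hat M$, $\hat P$ are, so $\mathrm{sym}$ also acts trivially; only the $U$-slot requires the nontrivial Stiefel projection $P_U$. I expect the main difficulty to be purely bookkeeping in the $\hat B$-slot, where $X,Y,\hat P,\hat Q$ all respond to $\xi_B$ simultaneously through two parallel Sylvester chains; conceptually this is the same Lemma~1 cascade already worked out for Theorem~1, simply instantiated with a different perturbation variable.
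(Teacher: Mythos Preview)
Your proposal is correct and follows essentially the same strategy as the paper: extend $J_2$ to the ambient space $\bar{\mathcal N}$, differentiate the Sylvester equations \eqref{eqn-12}, \eqref{eqn-13}, \eqref{Y_eqn}, \eqref{hat_Q_eqn}, use Lemma~\ref{lemma:1} together with the auxiliary equations \eqref{eqn-35} and \eqref{R_equation} to eliminate the differential terms, and then project. The only difference is organizational---the paper carries out the computation along a single combined direction $(U',B',C',M')$ and sorts terms at the end, whereas you treat the four slots separately; by linearity of the directional derivative this is equivalent, and your identities $K=Y-R_1$, $L=\hat Q+R_2$ are exactly those the paper uses (implicitly in the $\hat B$-coefficients).
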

\begin{proof}
	We denote the natural extension of $J_2$ from $\mathcal{N}$ to $\bar{\mathcal{N}}$ 
	as  
	$\bar{J}_2$.
	The derivative of $\bar{J}_2$ along the direction 
	$(U^\prime,B^\prime,C^\prime,M^\prime)\in T_{(U,\hat{B},\hat{C},\hat{M})}\mathcal{N}$ 
	is
	\begin{equation}\label{eqn-52}
		\begin{aligned}
			&\mathrm{D}\bar{J_2}(U,\hat{B},\hat{C},\hat{M})[(U^\prime,B^\prime,C^\prime,M^\prime)]=\\
			&2\mathrm{tr}({B^\prime}^\top(Y^\top B+\hat{Q}\hat{B}))+2\mathrm{tr}((B\hat{B}^\top)^\top\mathrm{D}Y[(U^\prime,B^\prime,C^\prime,M^\prime)])+\mathrm{tr}(\hat{B}{\hat{B}}^\top\mathrm{D}\hat{Q}[(U^\prime,B^\prime,C^\prime,M^\prime)]),
		\end{aligned}
	\end{equation}
	where $\mathrm{D}Y[(U^\prime,B^\prime,C^\prime,M^\prime)]$ and 
	$\mathrm{D}\hat{Q}[(U^\prime,B^\prime,C^\prime,M^\prime)]$ are obtained by 
	differentiating \eqref{Y_eqn} and \eqref{hat_Q_eqn}, respectively,
	\begin{align}
		A^\top\mathrm{D}Y[(U^\prime,B^\prime,C^\prime,M^\prime)]+\mathrm{D}Y[(U^\prime,B^\prime,C^\prime,M^\prime)]\hat{A}+H_1&=0,\label{eqn-53}\\
		\hat{A}^\top\mathrm{D}\hat{Q}[(U^\prime,B^\prime,C^\prime,M^\prime)]+\mathrm{D}\hat{Q}[(U^\prime,B^\prime,C^\prime,M^\prime)]\hat{A}+H_2&=0,\label{eqn-54}
	\end{align}
	in which
	\begin{align}
		H_1=&Y{U^\prime}^\top AU+YU^\top AU^\prime-C^\top C^\prime-M\mathrm{D}X[(U^\prime,B^\prime,C^\prime,M^\prime)]\hat{M}-MXM^\prime,\nonumber\\
		H_2=&{U^\prime}^\top A^\top U\hat{Q}+U^\top A^\top U^\prime\hat{Q}+\hat{Q}{U^\prime}^\top AU+\hat{Q}U^\top AU^\prime+{C^\prime}^\top\hat{C}+\hat{C}^\top C^\prime\nonumber\\
		&M^\prime\hat{P}\hat{M}+\hat{M}\mathrm{D}\hat{P}[(U^\prime,B^\prime,C^\prime,M^\prime)]
		\hat{M}+\hat{M}\hat{P}M^\prime.\nonumber
	\end{align}
	Besides, it follows from \eqref{eqn-12} and \eqref{eqn-13} that there hold
	\begin{align}
		A\mathrm{D}X[(U^\prime,B^\prime,M^\prime)]+\mathrm{D}X[(U^\prime,B^\prime,M^\prime)]\hat{A}^\top+X{U^\prime}^\top
		 A^\top U+XU^\top A^\top U^\prime+B{B^\prime}^\top&=0,\label{eqn-55}\\
		\hat{A}\mathrm{D}\hat{P}[(U^\prime,B^\prime,M^\prime)]+\mathrm{D}\hat{P}
		[(U^\prime,B^\prime,M^\prime)]{\hat{A}}^\top+{U^\prime}^\top AU\hat{P}+U^\top 
		AU^\prime\hat{P}+& \nonumber \\
		\hat{P}{U^\prime}^\top A^\top U+\hat{Q}U^\top A^\top 
		U^\prime+B^\prime{\hat{B}}^\top+\hat{B}{B^\prime}^\top&=0.\label{eqn-56}
	\end{align}

	Applying \lemmaref{lemma:1} to $\eqref{eqn-12}$ and $\eqref{eqn-53}$ leads to
	\begin{align}
		\mathrm{tr}((B\hat{B}^\top)^\top\mathrm{D}Y[(U^\prime,B^\prime,C^\prime,M^\prime)])=&\mathrm{tr}(X^\top
		 H_1)\nonumber=\mathrm{tr}({U^\prime}^\top(AUX^\top Y+A^\top UY^\top X))\\
		&-\mathrm{tr}({C^\prime}^\top CX)\nonumber-\mathrm{tr}({M^\prime}^\top X^\top 
		MX)\nonumber\\
		&-\mathrm{tr}(\mathrm{D}X[(U^\prime,B^\prime,C^\prime,M^\prime)]\hat{M}X^\top 
		M).\nonumber
	\end{align}
    It follows from $\eqref{eqn-35}$ and $\eqref{eqn-55}$ that  
	\begin{align}
		\mathrm{tr}(\mathrm{D}X[(U^\prime,B^\prime,C^\prime,M^\prime)]\hat{M}X^\top M)=&\mathrm{tr}((MX\hat{M})^\top\mathrm{D}X[(U^\prime,B^\prime,C^\prime,M^\prime)])\nonumber\\
		=&\mathrm{tr}(R_1^\top(X{U^\prime}^\top A^\top U+XU^\top A^\top U^\prime+B{B^\prime}^\top))\nonumber\\
		=&\mathrm{tr}({U^\prime}^\top(AUX^\top R_1+A^\top UR_1\top 
		X))+\mathrm{tr}({B^\prime}^\top R_1^\top B),\nonumber
	\end{align}
	which implies that 
	\begin{equation}\label{eqn-59}
		\begin{aligned}
			\mathrm{tr}((B\hat{B}^\top)^\top\mathrm{D}Y[(U^\prime,B^\prime,C^\prime,M^\prime)])=&\mathrm{tr}({U^\prime}^\top(AUX^\top K+A^\top UK^\top X))-\mathrm{tr}({B^\prime}^\top R_1^\top B)\\
			&-\mathrm{tr}({C^\prime}^\top CX)-\mathrm{tr}({M^\prime}^\top X^\top MX).
		\end{aligned}
	\end{equation}

	Similarly, after a straightforward algebraic manipulation based on $\eqref{eqn-13}$, 
	$\eqref{R_equation}$, 
	$\eqref{eqn-54}$ and $\eqref{eqn-56}$, we obtain 
	\begin{equation}\label{eqn-60}
		\begin{aligned}
			\mathrm{tr}(\mathrm{D}\hat{Q}[(U^\prime,B^\prime,C^\prime,M^\prime)]\hat{B}{\hat{B}}^\top)=&2\mathrm{tr}({U^\prime}^\top(AU\hat{P}L+A^\top UL\hat{P}))+2\mathrm{tr}({B^\prime}^\top R_2\hat{B})\\
			&+2\mathrm{tr}({C^\prime}^\top\hat{C}\hat{P})+2\mathrm{tr}({M^\prime}^\top\hat{P}\hat{M}\hat{P}),
		\end{aligned}
	\end{equation}
	where $L$ is determined by $\eqref{eqn-28}$. Substituting $\eqref{eqn-59}$ and 
	$\eqref{eqn-60}$ into $\eqref{eqn-52}$ gives
	\begin{align}
		&\mathrm{D}\bar{J_2}(U,\hat{B},\hat{C},\hat{M})[(U^\prime,B^\prime,C^\prime,M^\prime)]\nonumber\\
		=&2\mathrm{tr}({U^\prime}^\top(AU(X^\top K+\hat{P}L+A^\top U(X^\top 
		K+\hat{P}L)^\top))\nonumber\\
		&+2\mathrm{tr}({B^\prime}^\top(K^\top 
		B+L\hat{B}))+2\mathrm{tr}({C^\prime}^\top(\hat{C}\hat{P}-CX))
		+2\mathrm{tr}({M^\prime}^\top(\hat{P}\hat{M}\hat{P}-X^\top MX)),\nonumber
	\end{align}
	which leads to the Euclidean gradient
	\begin{align}
		&\mathrm{grad}\bar J_2(U,\hat{B},\hat{C},\hat{M}) \nonumber\\
		=&2(A^\top U(X^\top 
		K+\hat{P}L)^\top+AU(X^\top K+\hat{P}L),K^\top 
		B+L\hat{B},\hat{C}\hat{P}-CX,\hat{P}\hat{M}\hat{P}-X^\top MX).\nonumber
	\end{align}
	Because of 
	$\mathrm{grad}J_2=P_{(U,\hat{B},\hat{C},\hat{M})}(\mathrm{grad}\bar{J}_2)$, one can 
	get \eqref{eqn-51} based on the Euclidean gradient easily. We conclude the proof.
\end{proof}

\begin{algorithm}
	\renewcommand{\algorithmicrequire}{\textbf{Input:}}
	\renewcommand{\algorithmicensure}{\textbf{Output:}}
	\caption{Riemannian conjugate gradient method for MOR based on $\mathcal{N}$
		(PRCG).}
	\label{alg:2}
	\begin{algorithmic}
		\Require The coefficient matrices of LQO system $\Sigma$, and a positive integer 
		$k_{max}$
		\Ensure Reduced LQO model $\hat\Sigma$.
		\\Choose an initial matrix $U_0\in\mathrm{St}(n, r)$ as well as 
		$\hat{B}_0\in\mathbb{R}^{r\times m}$, $\hat{C}_0\in\mathbb{R}^{1\times r}$ and 
		$\hat{M}_0\in \mathrm{S}_r$.
		\\Compute the Riemannian gradient $\mathrm{grad}J_{2}(\mathcal{N}_0)$ by 
		$\eqref{eqn-51}$, and set $\eta_0=-\mathrm{grad}J_{2}(\mathcal{N}_0)$.
		\For{$k=0,1,\cdots, k_{max}-1$}
		\\\quad Choose a step size $t_k$ fulfilled $\eqref{eqn-64}$ and 
		$\eqref{eqn-65}$.
		\\\quad Set 
		$(U_{k+1},\hat{B}_{k+1},\hat{C}_{k+1},\hat{M}_{k+1})=\mathcal{R}_{\mathcal 
			N_k}(t_k\eta_k)$, and compute $\mathrm{grad}J_{2}(\mathcal N_{k+1})$.
		\\\quad Compute $\tilde{\mathcal{T}}_{t_k\eta_k}(\eta_k)$ and $\beta_{k+1}$ via  
		$\eqref{eqn-61}$ and $\eqref{eqn-63}$, respectively. 
		\\\quad Update the search direction via $\eta_{k+1}=-\mathrm{grad}J_{2}({\mathcal 
			N_{k+1}})+\beta_{k+1}\tilde{\mathcal{T}}_{t_{k}\eta_{k}}(\eta_{k})$.
		\EndFor
		\\\textbf{return} $\hat{A}=U^\top_{k_{max}} AU_{k_{max}}$, $\hat{B}_{k_{max}}$, 
		$\hat{C}_{k_{max}}$, $\hat{M}_{k_{max}}.$
	\end{algorithmic}  
\end{algorithm}

For solving optimization problem $\eqref{eqn-47}$, we also need to define a vector 
transport 
$\mathcal{T}$ associated with the the product manifold $\mathcal{N}$. Because of 
$\eqref{eqn-23}$ and $\eqref{eqn-48}$, one can define simply the vector transmission as 
\begin{equation}\label{eqn-61}
	\mathcal{T}_{(U_2^\prime,B_2^\prime,C_2^\prime,M_2^\prime)}(U_1^\prime,B_1^\prime,C_1^\prime,M_1^\prime)=(\mathcal{T}_{U_2^\prime}(U_1^\prime),B_1^\prime,C_1^\prime,M_1^\prime),
\end{equation}
for 
$(U_1^\prime,B_1^\prime,C_1^\prime,M_1^\prime),(U_2^\prime,B_2^\prime,C_2^\prime,M_2^\prime)\in
 T_{(U,\hat{B},\hat{C},\hat{M})}\mathcal{N}$. For brevity, we denote the metric and norm 
 on the tangent 
 space $T_{(U_k,\hat{B}_k,\hat{C}_k,\hat{M}_k)}\mathcal{N}$ by  
 $\left<\cdot,\cdot\right>_{\mathcal N_k}$ and $\left\|\cdot\right\|_{\mathcal N_k}$, 
 respectively. Besides, the retraction at the point $(U_k,\hat{B}_k,\hat{C}_k,\hat{M}_k)$ 
 is 
 referred as $\mathcal{R}_{\mathcal N_k}$, and the Riemannian gradient of $J_2$ at 
 $(U_k,\hat{B}_k,\hat{C}_k,\hat{M}_k)$ is denoted as $\mathrm{grad}J_{2}({\mathcal N_k})$.
Along the same line as \autoref{alg:1}, the iteration direction at the $k+1$ step of the 
optimization algorithm is given by
\begin{equation}
	\eta_{k}=-\mathrm{grad}J_{2}({\mathcal 
	N_k})+\beta_{k}\tilde{\mathcal{T}}_{t_{k-1}\eta_{k-1}}(\eta_{k-1}),\nonumber
\end{equation}
for $k=1,2, \cdots$, where the parameter $\beta_{k}$ is calculated according to
\begin{equation}\label{eqn-63}
	\beta_{k}=\frac{\|\mathrm{grad}J_{2}({\mathcal N_k})\|_{\mathcal 
	N_k}^2}{\langle\mathrm{grad}J_{2}({\mathcal 
	N_k}),\tilde{\mathcal{T}}_{t_{k-1}\eta_{k-1}}(\eta_{k-1})\rangle_{\mathcal 
	N_k}-\langle
	 \mathrm{grad}J_{2}({\mathcal N_k}),\eta_{k-1}\rangle_{\mathcal N_k}},
\end{equation}
and the deflated vector transport $\tilde{\mathcal{T}}_{t_{k-1}\eta_{k-1}}(\eta_{k-1})$ 
is 
constructed based on $\mathcal{T}_{t_{k-1}\eta_{k-1}}(\eta_{k-1})$ with the 
same strategy 
in \eqref{eqn-44}. The step size $t_k$ should satisfy the Wolfe conditions
\begin{equation}\label{eqn-64}
	J_2(\mathcal{R}_{\mathcal N_k}(t_k\eta_k))\leq 
	J_2(U_k,\hat{B}_k,\hat{C}_k,\hat{M}_k)+c_1t_k\langle\mathrm{grad}J_{2}({\mathcal 
	N_k}),\eta_k\rangle_{\mathcal N_k},
\end{equation}
\begin{equation}\label{eqn-65}
	\langle\mathrm{grad}J_2(\mathcal{R}_{\mathcal 
	N_k}(t_k\eta_k)),\mathcal{T}_{t_k\eta_k}(\eta_k)\rangle_{\mathcal{R}_{\mathcal 
	N_k}(t_k\eta_k)}\geq
	 c_2\langle\mathrm{grad}J_{2}({\mathcal N_k}),\eta_k\rangle_{\mathcal N_k},
\end{equation}
where $0<c_1<c_2<1$. We present the main steps of Riemannian conjugate gradient method for
\eqref{eqn-47} based on the product manifold in \autoref{alg:2}.


\section{Efficient execution via the approximate solutions of Sylvester 
equations}\label{sec:sec-4}

The focus of linear-search strategies is the choice of 
the search direction and the step size in the procedure of numerical optimization. In 
\autoref{alg:1} and \autoref{alg:2}, a couple 
of Sylvester equations are solved repeatedly in each iterate for the selection of the 
direction, where the coefficient matrices change with the iteration. Besides, the 
evaluation of the cost function in Wolf conditions is also associated with the 
solution of Sylvester equations. Because the order of $\eqref{eqn-13}$ and 
$\eqref{eqn-28}$ is $r\times r$, where $r$ is the order of reduced models, one can get 
the solution  
via the standard solvers efficiently \cite{Simoncini2016}. 
However, $\eqref{eqn-12}$ and $\eqref{eqn-27}$ are of order $n\times r$, where $n$ is the 
order of original systems and much higher, and the standard solver for the solution of 
$\eqref{eqn-12}$ and $\eqref{eqn-27}$ is numerically expensive. In fact, the expense of 
solving the high-order Sylvester equations dominates the whole cost of the proposed 
algorithms. In this section, we present an efficient scheme to get an approximate 
solution of 
$\eqref{eqn-12}$ and $\eqref{eqn-27}$, which takes advantage of the special structure of 
Sylvester equations involved in \autoref{alg:1} and \autoref{alg:2} and enables an 
efficient execution for our approach.

We start with the integral formulation of the 
Gramians. It follows from $\eqref{eqn-3}$ that the controllability Gramian $P_e$ of 
$\Sigma_e$ has the similar expression  
\begin{equation}
	P_e=\int_0^\infty e^{A_et}B_eB_e^\top e^{A_e^\top t}\mathrm{d}t.\nonumber
\end{equation}
We rewrite it as a block form
\begin{align}
	P_e=\int_0^\infty
	\left[
	\begin{matrix}
		e^{At}BB^\top e^{A^\top t}&e^{At}B\hat{B}^\top e^{\hat{A}^\top t}\\
		e^{\hat{A}t}\hat{B}B^\top e^{A^\top t}&e^{\hat{A} t}\hat{B}\hat{B}^\top 
		e^{\hat{A}^\top t}\nonumber
	\end{matrix}
	\right]
	\mathrm{d}t.\label{eqn-69}
\end{align}
Then the partitioned form $\eqref{eqn-11}$ of $P_e$ implies the explicit 
expression for the solution of \eqref{eqn-12}
\begin{equation}\label{X-int}
	X=\int_0^\infty e^{At}B(e^{\hat{A}t}\hat{B})^\top\mathrm{d}t.
\end{equation}
We aim to approximate the exponential function $e^{At}$ with its truncated expansion 
over the Laguerre function basis. With the $i$th Laguerre polynomial 
\begin{equation}
	l_{i}(t)=\frac{e^{t}}{i!}\frac{\mathrm d^{i}}{\mathrm dt^{i}}(e^{-t}t^{i}),\quad 
	i=0,1,\cdots\nonumber
\end{equation}
the scaled Laguerre function is defined as
\begin{equation}
	\phi_i^\alpha(t)=\sqrt{2\alpha}e^{-\alpha t}l_i(2\alpha t),\nonumber
\end{equation}
where $\alpha$ is a positive scaling parameter called time-scale factor 
\cite{Atkinson2005,Knockaert2000}.
The sequence $\phi_i^\alpha(t)$ of scaled Laguerre functions
forms a uniformly bounded orthonormal basis for the Hilbert space $L_2(\mathbb R_{+})$. 
For the stable matrix $A$, there holds
\begin{equation}\label{eqn-66}
	e^{At}=\sum_{i=0}^\infty A_i\phi_i^\alpha\left(t\right),
\end{equation}
where the coefficient matrices $\{A_i\}_{i=0}^\infty $  are defined by the recursive 
formula \cite{Xiao2022,Moore2011,Long2012}
\begin{equation}
	\begin{aligned}
		A_0&=\sqrt{2\alpha}(\alpha I-A)^{-1},\\
		A_i&=[(A+\alpha I)(A-\alpha I)^{-1}]A_{i-1},\quad i=1,2,\cdots.\nonumber
	\end{aligned}
\end{equation}
A similar expansion $e^{\hat At}=\sum_{i=0}^\infty \hat A_i\phi_i^\alpha\left(t\right)$ 
can be obtained by replacing $A$ with $\hat A$ in \eqref{eqn-66}. We adopt the truncated 
expansion of $e^{At}$ and $e^{\hat{A}t}$ 
simultaneously in \eqref{X-int} over the same
basis $\{\phi_{i}^{\alpha}(t)\}_{i\in\mathbb{N}}$, and the solution $X$ is approximated 
as 
\begin{align}\label{app_Lag_X}
	X&=\int_0^\infty\left(\sum_{i=0}^{\infty}A_{i}B\phi_{i}^{\alpha}\left(t\right)\right)
	\left(\sum_{j=0}^{\infty}\hat{A}_j\hat{B}\phi_{j}^{\alpha}\left(t\right)\right)^\top\mathrm{d}t\nonumber\\
	&\approx\int_0^\infty\left(\sum_{i=0}^{N-1}A_iB\phi_i^\alpha\left(t\right)\right)
	\left(\sum_{j=0}^{N-1}\hat{A}_j\hat{B}\phi_j^\alpha\left(t\right)\right)^\top\mathrm{d}t.
\end{align}
Due to the orthogonality of Laguerre functions
\begin{equation}\nonumber
	\int_0^\infty\phi_i^\alpha(t)\phi_j^\alpha\mathrm{d}t=
	\begin{cases}
		0,&i\neq j,\\
		1,&i=j,
	\end{cases}
\end{equation}
the approximation \eqref{app_Lag_X} reduces to 
\begin{equation}\nonumber
	X\approx A_0B(\hat A_0\hat B)^\top+A_1B(\hat A_1\hat B)^\top+\cdots+A_{N-1}B(\hat 
	A_{N-1}\hat B)^\top. 
\end{equation}
With the notation
\begin{equation*}
		F=\left(\begin{matrix}A_0B\quad 
		A_1B\quad\cdots\quad A_{N-1}B\end{matrix}\right),\quad
		\hat{F}=\left(\begin{matrix}\hat{A}_0\hat{B}\quad 
		\hat{A}_1\hat{B}\quad\cdots\quad\hat{A}_{N-1}\hat{B}\end{matrix}\right),
\end{equation*}
we get the low-rank approximate solution of (\ref{eqn-12})
\begin{equation}\label{eqn-70}
	X\approx F\hat{F}^\top. 
\end{equation}
As a consequence, one can calculate the approximate solution $X$ in \autoref{alg:1} and 
\autoref{alg:2} simply by the matrix-vector product. More importantly, although the 
factor 
$F$ is related to the high-order original systems, one just needs to calculate $F$ one 
time at the beginning of \autoref{alg:1} and 
\autoref{alg:2} because it is unchanged during the whole iteration. While the factor 
$\hat F$ 
changes step by step in the iteration, it is defined completely by reduced models, and 
can be calculated cheaply. So, the low-rank approximation \eqref{eqn-70} results in an 
elegant split for the solution of \eqref{eqn-12} which facilitates the execution of 
\autoref{alg:1} and \autoref{alg:1} a lot.

For the solution of $\eqref{eqn-27}$, there is a similar integral expression 
\begin{equation}\label{eqn-71}
	K=-\int_0^\infty e^{A^\top t}(C^\top\hat{C}+2MX\hat{M})e^{\hat{A}t}\mathrm{d}t.
\end{equation}
Substituting $X\approx F\hat{F}^\top$ into $\eqref{eqn-71}$ gives rise to
\begin{align}
	K&\approx-\int_0^\infty e^{A^\top t}(C^\top\hat{C}+2MF\hat{F}^\top\hat{M})e^{\hat{A}t}\mathrm{d}t\nonumber\\
	&=-\int_0^\infty e^{A^\top t}
	\begin{pmatrix}C^\top&\sqrt{2}MF\end{pmatrix}
	\begin{pmatrix}\hat{C}\\\sqrt{2}\hat{F}^\top\hat{M}\end{pmatrix}
	e^{\hat{A}t}\mathrm{d}t\nonumber\\
	&=-\int_0^\infty e^{A^\top t}\begin{pmatrix}C^\top&\sqrt{2}MF\end{pmatrix}
	\left(e^{\hat{A}^\top t}\begin{pmatrix}\hat{C}^\top&\sqrt{2}\hat{M}\hat{F}\end{pmatrix}\right)^\top
    \mathrm{d}t.\nonumber
\end{align}
We adopt the truncated expansion of $e^{At}$ and $e^{\hat{A}t}$ over the basis 
$\{\phi_{i}^{\alpha}(t)\}_{i\in\mathbb{N}}$ again, and a low-rank approximation to $K$ is 
derived 
\begin{equation}
	K\approx-G\hat{G}^\top,\nonumber
\end{equation}
where
\begin{align}
	G&=\left(
	\begin{matrix}
		A_0^\top\begin{pmatrix}C^\top&\sqrt{2}MF\end{pmatrix}
		&A_1^\top\begin{pmatrix}C^\top&\sqrt{2}MF\end{pmatrix}
		&\cdots&A_{N-1}^\top\begin{pmatrix}C^\top&\sqrt{2}MF\end{pmatrix}
	\end{matrix}
	\right),\nonumber\\
	\hat{G}&=\left(
	\begin{matrix}
		\hat{A}_0^\top\begin{pmatrix}\hat{C}^\top&\sqrt{2}\hat{M}\hat{F}\end{pmatrix}
		&\hat{A}_1^\top\begin{pmatrix}\hat{C}^\top&\sqrt{2}\hat{M}\hat{F}\end{pmatrix}
		&\cdots
		&\hat{A}_{N-1}^\top\begin{pmatrix}\hat{C}^\top&\sqrt{2}\hat{M}\hat{F}\end{pmatrix}
	\end{matrix}
	\right).\nonumber
\end{align}

The cost of solving Sylvester equations \eqref{eqn-12} and 
\eqref{eqn-27} repeatedly dominates the whole cost 
of \autoref{alg:1} and \autoref{alg:2}. In practice, one can solve \eqref{eqn-12} and 
\eqref{eqn-27} approximately in the iteration by the proposed method in this section. We 
take SRCG as an 
example and measure the main
computational cost by the 
number of floating point multiplications (flops). For the approximation to $X$, one can 
implement 
the matrix-vector product by performing 
an LU decomposition of $A-\alpha I$, instead of calculating the inverse 
directly, and the main cost of the factor $F$ is  
$O(\frac{2}{3}n^3+(3N-1)n^2m)$. While for the factor $\hat F$, it varies as the iteration 
continues, and the whole cost is $O(\frac{2}{3}r^3k_{max}+(3N-1)r^2mk_{max})$. Likewise, 
the cost for the factors of $K$ is $O((3N-1)n^2(1+Nm))$ and $O((3N-1)r^2(1+Nm)k_{max})$, 
respectively. Note that when the LU decomposition of $A-\alpha I$ is available, it can be 
shared for the calculation of $X$ and $K$. As a result, noticing that $r\ll n$ in the 
large-scale settings, the overall cost of \autoref{alg:1} is dominated by 
$O(\frac{2}{3}n^3+(3N-1)n^2((N+1)m+1))$ flops, which is independent of the number of 
iterates 
$k_{max}$. However, if the direct solver for Sylvester equations is employed in 
\autoref{alg:1}, the main cost is $O(2n^3{k_{max}})$ flops.

\section{Numerical Examples}\label{sec:sec-5}

In this section two numerical examples are used to illustrate the effectiveness of the 
proposed methods. All simulation results are obtained in Matlab (R2023a) on a laptop with 
Intel(R) Core(TM) i5-9300H processor with 2.40 GHz and 8 GB RAM. 

We adopt the Krylov subspace method to generate the initial value $V_0$  for \autoref{alg:1} 
(SRCG), and it leads to the following initial values for \autoref{alg:2} (PRCG)
\begin{equation*}
	\{U_0, \hat B_0, \hat C_0, \hat M_0\}=\{V_0, 
	V^\top_0 B, CV_0, V^\top_0 MV_0\}.
\end{equation*} 
The reduced order is referred as 
$r$ for brevity in general. The relative norm of Riemannian gradient 
$\delta=\|g_k\|/\|g_0\|<\varepsilon$ is used to terminate the iteration in SRCG and PRCG. 
Here, $g_0$ and $g_k$ denote Riemannian gradients in the initial 
and the $k$-th iteration, respectively, $\|\cdot\|$ is the norm defined on the
tangent space, and $\varepsilon$ is a small positive scalar to ensure 
sufficient decay of gradients. We compare the proposed methods with the existing methods, 
e.g., the POD method in \cite{Benner2017} and BT method in \cite{Benner2021} for LQO 
systems, in terms of the relative $H_2$ error $\|\Sigma-\hat\Sigma\|_{H_2}/\|\Sigma\|_{H_2}$. 
For the execution of POD, we simply assemble the uniformly distributed snapshots 
of the exact solution for a given input $u(t)$, and perform the SVD decomposition to 
extract $r$ dominate modes to generate reduced models.

\subsection{A synthetic example}

A synthetic example is introduced by following the same technique provided in 
\cite{Sato2016}. We construct randomly an $n\times n$ symmetric negative 
definite matrix $A_{sym}$ and an $n \times n$ skew-symmetric matrix $A_{skew}$. The 
coefficient matrix $A$ of \eqref{eqn-1} is defined as 
$A=A_{sym}+A_{skew}$, which implies that $A+A^\top<0$ and the system is stable. All 
elements of the input 
vector $B\in\mathbb{R}^{n}$ and the output vector $C\in\mathbb{R}^n$ are 1.
For the quadratic part in the output, we use the identity matrix $M=I$.
Note that the identity matrix is full-rank and cannot be well-approximated by a low-rank 
matrix. 

We first set $n=30$ simply to demonstrate the property of our methods. The reduced order 
is $r=6$ in the simulation, and $V_0$ is selected as the orthogonal 
basis of the subspace $\mathrm{K}_r(A,B)=\mathrm{span}\{B, AB, \cdots, A^{r-1}B\}$ with 
the aid of Arnoldi procedure. In \autoref{alg:1} and 
\autoref{alg:2}, we set $\omega=0.8, \gamma=1, c_1=0.25, c_2=0.95$ to select the step 
size $t_k=\omega^{m_k}\gamma$ satisfying the Wolfe conditions. The iteration proceeds 
until the relative norm of the gradients is less than $\varepsilon=1\times 
10^{-4}$.
\autoref{fig-2} depicts the convergence behavior of RCG-St and RCG-Pr. For this example,  
SRCG takes on faster convergence during the iteration, while PRCG results in a slightly 
lower value of cost function because it searches the minimum in a general 
framework. 

\begin{figure}[htbp]
	\centering
	\includegraphics[width=0.6\textwidth]{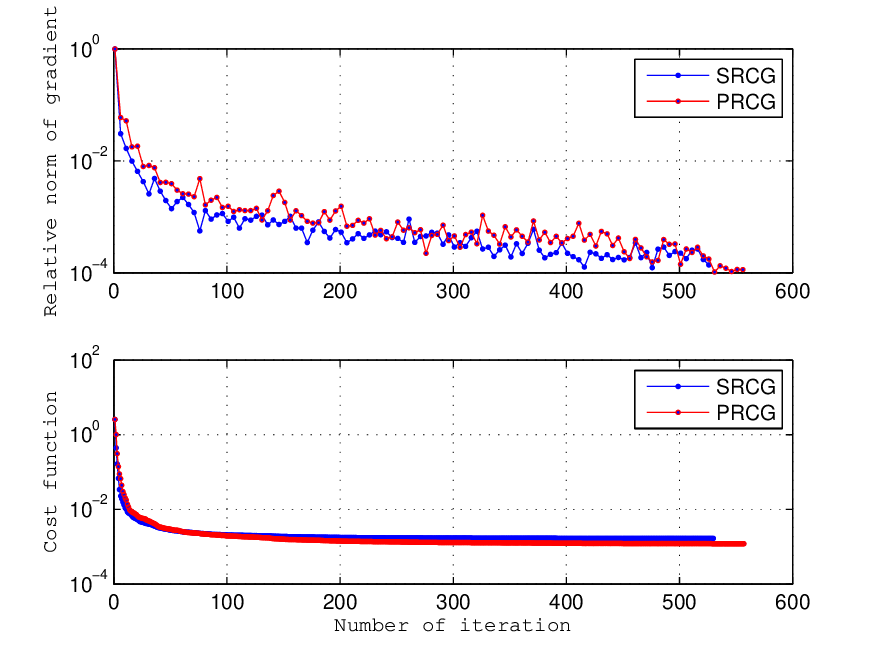}
	\caption{Evolution of the relative norm of Riemannian gradients and the value of 
		cost functions with $n=30$.}
	\label{fig-2}
\end{figure}

Given the zero initial condition and the input 
$u(t)=\mathrm{exp}(\mathrm{sin}(2t))$, \autoref{fig-3} shows the transient time responses 
and the associated relative errors of each reduced model. All reduced models are of order 
$r=6$. The "Krylov" model is the one produced by the projection matrix $V_0$, 
which exhibits a distinct mismatch with the original system in the output picture. 
Compared with the initial values, SRCG and PRCG provide much better approximation by 
minimizing the $H_2$ norm of error systems. For comparison, we solve the 
original system in the time interval [0 3], and the 
first 6 dominate modes based on 100 samples are adopted to generate the "POD" model. 
BT method presented in 
\cite{Benner2021} is also carried out in the simulation. Except for "Krylov" model, the 
other models provide almost the same accuracy in the time domain for the input 
$u(t)=\mathrm{exp}(\mathrm{sin}(2t))$. The relative $H_2$ error of
each reduced model for $r=2, 6, 10$ are listed in \autoref{tab:1}, where the POD and 
Krylov method provide relatively lower accuracy approximation.

\begin{figure}[htbp]
	\centering
	\includegraphics[width=0.6\textwidth]{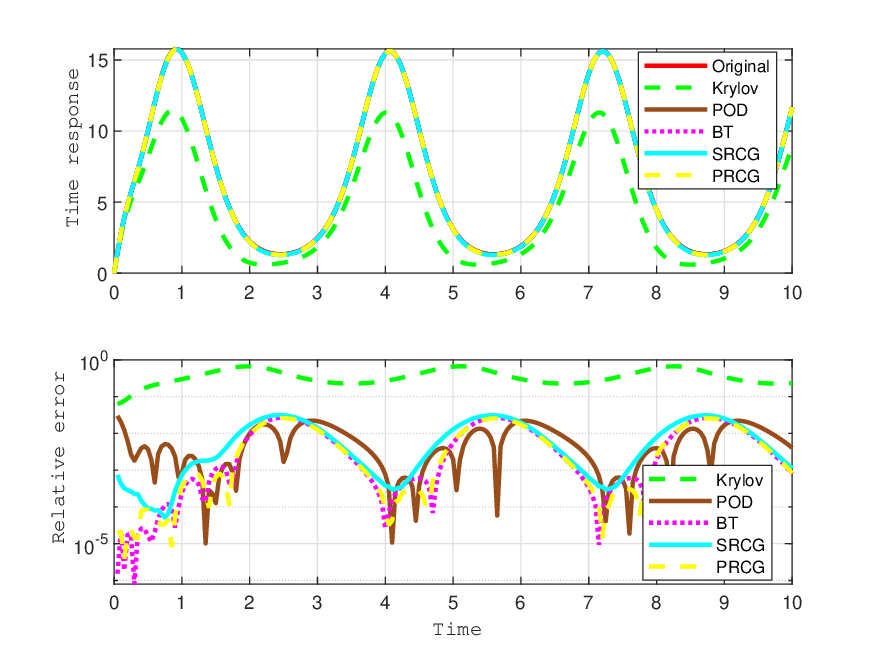}
	\caption{Time response and relative error of reduced models with the input 
	$u(t)=\mathrm{exp}(\mathrm{sin}(2t))$.}
	\label{fig-3}
\end{figure}

\begin{table}[htb]
	\centering
	\caption{\centering The relative $H_2$ norm error of reduced models for different 
		$r$ with $n=30$.}
	\label{tab:1}
	\begin{tabular}{lccccc}
		\toprule
		&Krylov&POD&BT&SRCG&PRCG\\ 
		\midrule
		$r=2$ & 6.019e-01 & 4.659e-01 & 1.635e-01 & 2.086e-01 & 1.491e-01 \\
		$r=6$ & 1.187e-01 & 7.482e-02 & 3.946e-03 & 4.798e-03 & 4.078e-03 \\
		$r=10$ & 2.054e-02 & 4.713e-03 & 9.926e-04 & 1.221e-03 & 1.017e-03 \\ 
		\bottomrule
	\end{tabular}
\end{table}

We also test the performance of the proposed methods by setting $n=300$ in this example. 
A $300 \times 300$ symmetric negative definite matrix and a $300\times 300$ 
skew-symmetric matrix are produced randomly to generate a stable matrix. The Krylov 
subspace method is executed again to obtain the initial values for SRCG and PRCG. In the 
iteration, we set $\omega=0.5, \gamma=1, c_1=0.1, c_2=0.9$, and the criterion 
$\varepsilon=1\times 10^{-3}$ is used to stop the iteration. The evolution of the norm of 
gradients and the value of cost functions is shown in \autoref{fig-12}. In 
\autoref{tab:2}, 
we list 
the relative $H_2$ error of reduced models for each value of $r=2, 6, 10$.  
Generally, the relative error of reduced models decrease notably as the reduced order 
rises.

\begin{figure}[htbp]
	\centering
	\includegraphics[width=0.6\textwidth]{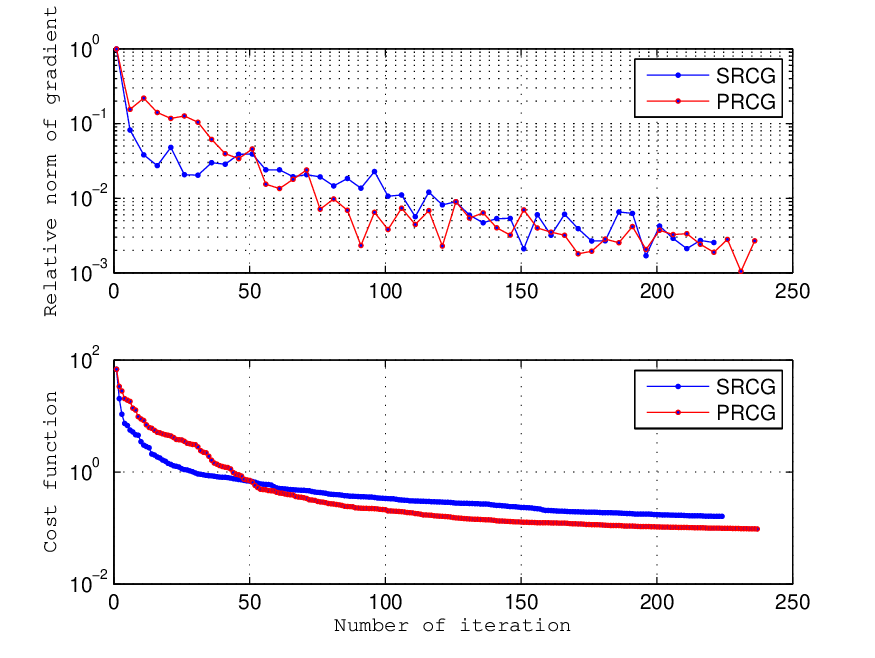}
	\caption{Evolution of the relative norm of Riemannian gradients and the value of 
		cost functions with $n=300$.}
	\label{fig-12}
\end{figure}

\begin{table}[htb]
	\centering
	\caption{\centering The relative $H_2$ error of reduced models for different 
		$r$ with $n=300$. }
	\label{tab:2}
	\begin{tabular}{lccccc}
		\toprule
		&Krylov&POD&BT&SRCG&PRCG\\ 
		\midrule
		$r=2$ & 5.605e-01 & 5.422e-01 & 8.394e-02 & 1.342e-01 & 8.304e-02  \\
		$r=6$ & 1.834e-01 & 1.642e-01 & 5.471e-03 & 1.087e-02 & 8.385e-03  \\
		$r=10$ & 5.942e-02 & 2.003e-02 & 2.493e-04 & 1.273e-03 & 1.190e-03 \\ 
		\bottomrule
	\end{tabular}
\end{table}

\subsection{Heat diffusion equation}

We consider an example coming from the collection of benchmark examples for model 
reduction \cite{Chahlaoui2002}. It is the heat diffusion equation for the one-dimensional
\begin{equation*}
	\begin{cases}
		\frac{\partial}{\partial t}T(x,t)=\alpha\frac{\partial^2}{\partial 
		x^2}T(x,t)+u(x,t),&x\in(0,1),\,t > 0,\\
		T(0,t)=T(1, t)=0,&t>0,\\
		T(x,0)=0,&x\in(0,1),
	\end{cases}
\end{equation*}
where $T(x,t)$ represents the temperature field on a thin rod. The semi-discretization of 
the spatial domain via the equidistant step size $1/(1+n)$ leads to a linear system of 
order $n$. We set the quadratic part of the output via a diagonal matrix $M$, which has 
equal diagonal elements and $\mathrm{tr}(M)=1$.

\begin{figure}[htbp]
	\centering
	\includegraphics[width=0.6\textwidth]{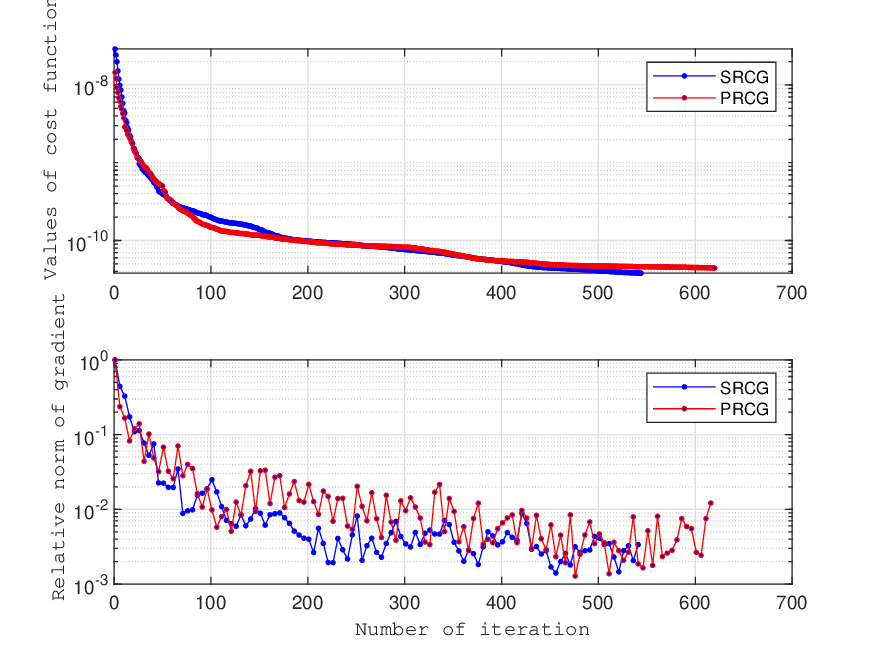}
	\caption{Evolution of the relative norm of Riemannian gradients and the value of 
		cost functions.}
	\label{fig-21}
\end{figure}

\begin{figure}[htbp]
	\centering
	\includegraphics[width=0.6\textwidth]{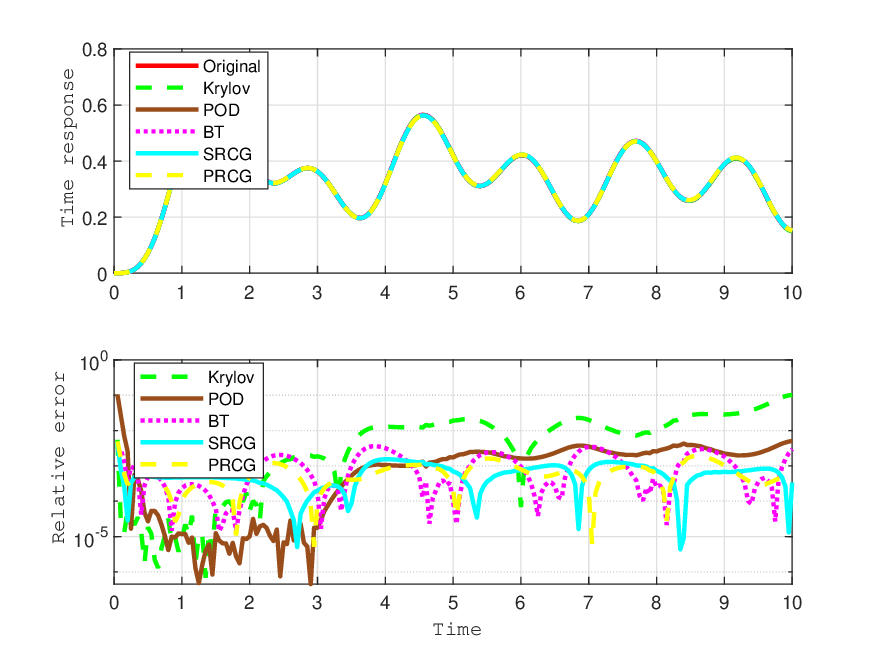}
	\caption{Time response and relative error of reduced models with the input 
		$u(t)=100\sin(2t)$.}
	\label{fig-22}
\end{figure}

We set $n=200$ and $r=10$ throughout the simulation. The initial value $V_0$ is the 
orthogonal basis of
the rational Krylov subspace spanned by the vectors $(s_iI-A)^{-1}B$, where $I$ is the identity
matrix and $s_i=2i$ for $i=2, 4, \cdots, 20$. With the parameters $\omega=0.8, \gamma=200, c_1=0.3, c_2=0.9$, \autoref{alg:1} and \autoref{alg:2} are carried out along with the criterion 
$\varepsilon=1\times 10^{-3}$. The convergence behavior of SRCG and PRCG is displayed in \autoref{fig-21},
where these two methods exhibit a similar convergence behavior in this example. We also 
plot the outputs and the corresponding relative errors of all reduced models in 
\autoref{fig-22} 
when the system is impulsed by the input $u(t)=100\sin(2t)$. Although we cannot distinct 
all reduced models clearly from the outputs, the relative error indicates that the 
"Krylov" model, generated via $V_0$, has larger error compared with the others. SRCG and 
PRCG reduce the $H_2$ 
error gradually via the iteration on the matrix manifold, and the resulting reduced 
models take on a better accuracy than the "BT" model. The "POD" model is produced by 
sampling the state on the time interval [0 3], and the error becomes larger as time 
progresses. The relative $H_2$ error of reduced models for $r=5, 10, 15$ is listed in 
\autoref{tab:3}, where SRCG, PRCG and BT yield higher accuracy in the sense of $H_2$ norm 
for this example.

\begin{table}[htb]
	\centering
	\caption{\centering The relative $H_2$ error of reduced models for different 
		$r$.}
	\label{tab:3}
	\begin{tabular}{lccccc}
		\toprule
		&Krylov&POD&BT&SRCG&PRCG\\ 
		\midrule
		$r=5$ & 1.098e-01 & 1.547e-01 & 1.046e-03 & 1.509e-02 & 1.189e-02  \\
		$r=10$ & 5.685e-03 & 4.125e-02 & 9.242e-05 & 5.922e-04 & 5.053e-04  \\
		$r=15$ & 2.423e-03 & 2.142e-02 & 1.153e-05 & 1.491e-04 & 1.112e-04  \\ 
		\bottomrule
	\end{tabular}
\end{table}

\section{Conclusions}\label{sec:sec-6}

We have investigated the $H_2$ optimal MOR for LQO systems based on the matrix manifold. 
By introducing the optimization problem on the Stiefel manifold and product manifold, the 
Dai-Yuan-type Riemannian conjugate gradient method results in the desired reduced models 
which preserve the quadratic structure of original systems. The low-rank approximation to 
the solution of Sylvester equations based on the truncated polynomial expansions enables 
an efficient execution of the proposed approach. The simulation results show that the 
proposed methods produce better reduced models in the $H_2$ norm measure.





\addcontentsline{toc}{section}{Reference}
\markboth{Reference}{}
\bibliographystyle{elsarticle-num}
\bibliography{reference}

\end{document}